\documentclass{article}
\usepackage{amssymb}
\usepackage{dsfont}
\usepackage{textcomp}
\usepackage{amsmath}
\usepackage{pgf}
\usepackage{txfonts}
\usepackage[latin1]{inputenc}
\usepackage[T1]{fontenc}
\usepackage{graphicx}


\newtheorem{theorem}{Theorem}[section]
\newtheorem{corollary}[theorem]{Corollary}

\newtheorem{proposition}[theorem]{Proposition}

\newtheorem{theo}{Theorem}[section]
\newtheorem{defi}[theo]{Definition}
\newenvironment{definition}{\begin{defi}\rm}{\end{defi}}
\newtheorem{remarque}[theo]{Remark}
\newenvironment{remark}{\begin{remarque}\rm}{\end{remarque}}
\newtheorem{exemple}[theo]{Example}
\newenvironment{example}{\begin{exemple}\rm}{\end{exemple}}
\newtheorem{prf}{\it{Proof.}}

\newenvironment{proof}{\begin{prf}\rm}{\hfill$\Box$\end{prf}}

\def\N{\mathbb{ N}}
\def\Z{\mathbb{ Z}}
\def\Q{\mathbb{ Q}}
\def\R{\mathbb{ R}}
\def\C{\mathbb{ C}}

\title{On generalized series fields and exponential-logarithmic series fields with derivations.}

\author{Micka\"{e}l Matusinski \thanks{The author thanks Salma Kuhlmann for reading a preliminary version of this paper and providing helpful comments. He thanks also the referee of the journal Order for having corrected several mistakes and provided interesting comments, in particular for having suggested the statement of Theorem 2.5.}}

\begin{document}

\maketitle

\begin{abstract}
We survey some important properties of fields of generalized series and of exponential-logarithmic series, with particular emphasis on their possible differential structure, based on a joint work of the author with S. Kuhlmann \cite{matu-kuhlm:hardy-deriv-gener-series,matu-kuhlm:hardy-deriv-EL-series}.
\end{abstract}

%


\section{Introduction on generalized series fields}

\begin{definition}[Generalized series and their natural valuation]
Let $k$ be a  field  and $\Gamma$ be a totally ordered Abelian group. A \textbf{generalized series} with coefficients in $k$ and exponents in  $\Gamma$ is a map $a:\Gamma\rightarrow k$, denoted by :
\begin{center}
$a=\displaystyle\sum_{\alpha\in\Gamma}a_\alpha t^\alpha$
\end{center}
 with its support $\mathrm{Supp}\ a:=\{\alpha\in\Gamma\ |\ a_\alpha\neq  0\}$ that is well-ordered. 

We denote by $k((\Gamma))$ the set of generalized series, which is actually a \textbf{field} when endowed with the componentwise sum and (the straightforward generalization of) the convolution product. 

The \textbf{canonical valuation} on $k((\Gamma))$ is:
\begin{center}
$\begin{array}{lccc}
v:&k((\Gamma))&\rightarrow&\Gamma\cup\{\infty\}\\
&a&\mapsto&\min(\mathrm{Supp}\ a)\\
&0&\mapsto&\infty
\end{array}$
\end{center}
Since $\Gamma$ is an ordered group, one can define on it the \textbf{Archimedian equivalence relation} :
\begin{center}
	$\forall \alpha_1,\alpha_2\in\Gamma,\ \alpha_1 \sim_+ \alpha_2 \Leftrightarrow \exists n,\ n|\alpha_1|\leq |\alpha_2|\ \textrm{and}\ n|\alpha_2|\leq |\alpha_1|$
\end{center}
and the corresponding set $\Phi$ of \textbf{Archimedian equivalence classes} $\phi:=[\gamma]$ for $\gamma\in\Gamma$. 
The set $\Phi$ inherits naturally a total order $\leq$ from the one of $\Gamma$: \begin{center}
	 $\phi_1=[\alpha_1]\leq \phi_2=[\alpha_2]\leq \infty:=[0]\Leftrightarrow 0\leq |\alpha_2|\leq |\alpha_1|$.
\end{center} The order type of $\Phi$ is called the \textbf{rank} of $\Gamma$, and as well the \textbf{rank} of $k((\Gamma))$.
\end{definition}

Generalized series are ubiquitous objects in mathematics. They generalize the classical field of Laurent series ($\Gamma=\Z$), and Puiseux series form a subfield of $k((\Gamma))$  for $\Gamma=\Q$. 

 A few years after the pioneering work of Veronese and Levi-Civita on infinite and infinitesimal numbers \cite{veronese:fundamenti-geom, levi-civita:infinitesimi,levi-civita:numeri-transfiniti}, H. Hahn introduced in \cite{hahn:nichtarchim} the generalized series in the case where $k=\R$, along the proof of his embedding theorem for (totally) ordered Abelian groups:
\begin{theorem}[Hahn's embedding]\label{theo:hahn1}
Any ordered Abelian group is order isomorphic to a subgroup of the Hahn group over the divisible hull of its skeleton (see Section \ref{sect:deriv}). 
\end{theorem}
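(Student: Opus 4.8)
The plan is to follow the classical route: reduce to the divisible case, build the target Hahn group via Hölder's theorem, and then construct the embedding by a Zorn's lemma argument whose inductive step uses the maximal completeness of Hahn products. First I would reduce to the case where the ordered Abelian group $G$ is divisible. Indeed $G$ embeds as an ordered group into its divisible hull $\Q\otimes G$, which is an ordered $\Q$-vector space with the same rank $\Phi$ and whose Archimedean components are exactly the divisible hulls of those of $G$; hence an order embedding of $\Q\otimes G$ into the Hahn group over \emph{its} skeleton is already an embedding into the Hahn group over the divisible hull of the skeleton of $G$, and restricting it gives what we want. So assume $G$ divisible, write $v$ for its natural valuation with value set $\Phi$, and for $\phi\in\Phi$ let $B_\phi$ be the Archimedean component at $\phi$, a divisible Archimedean ordered group. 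By Hölder's theorem each $B_\phi$ embeds order-preservingly into $\R$, so I may form the Hahn group $H$ of maps $f\colon\Phi\to\bigsqcup_\phi B_\phi$ with $f(\phi)\in B_\phi$ and well-ordered support, with componentwise addition and the order given by the sign of the leading coefficient $f(\min\mathrm{Supp}\,f)$. This $H$ is exactly the Hahn group over the divisible hull of the skeleton of $G$; it carries a natural valuation $v_H(f)=\min\mathrm{Supp}\,f$, with value set $\Phi$ and components $B_\phi$.

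Next I would apply Zorn's lemma to the set $\mathcal P$ of pairs $(V,\psi)$ in which $V$ is a $\Q$-subspace of $G$ and $\psi\colon V\to H$ is a $\Q$-linear map that is \emph{valuation compatible}: $v_H(\psi x)=v(x)$ for every nonzero $x\in V$, and the leading coefficient $(\psi x)(v(x))$ equals the residue of $x$ in $B_{v(x)}$. Such a map is automatically an order embedding, since then $(\psi x)(v(x))$ has the same sign as $x$ for every nonzero $x$ and the order on $H$ is read off from the leading coefficient. The set $\mathcal P$ is nonempty (take $V=\{0\}$) and closed under unions of chains, so it has a maximal element $(V,\psi)$; it remains to prove $V=G$.

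Suppose, for contradiction, that there is $g\in G\setminus V$, and look at the set $v(g-V)\subseteq\Phi$. If $v(g-V)$ has a maximum, then after replacing $g$ by $g-x_0$ for a suitable $x_0\in V$ I may assume $v(g)\ge v(g-x)$ for all $x\in V$; one checks that the residue $r$ of $g$ in $B_{v(g)}$ is then not the residue of any element of $V$ of Archimedean class $v(g)$, and I extend $\psi$ to $V\oplus\Q g$ by sending $g$ to the element of $H$ supported only at the point $v(g)$, with value $r$ there. If instead $v(g-V)$ has no maximum, I pick a well-ordered cofinal family $(\phi_i)_i$ in $v(g-V)$ with no last element, together with elements $x_i\in V$ such that $v(g-x_i)=\phi_i$; then $(x_i)_i$ is a pseudo-Cauchy sequence in $V$, its image $(\psi x_i)_i$ is a pseudo-Cauchy sequence in $H$, and --- here is the heart of the matter --- since $H$ is maximally (spherically) complete this image sequence has a pseudo-limit $f_\infty\in H$, to which I send $g$. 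In both cases a direct computation, writing an arbitrary element of $V\oplus\Q g$ in the form $q(g-y)$ with $y\in V$ and $q\in\Q^\times$ and comparing leading terms, shows that the extended map is again $\Q$-linear and valuation compatible, hence lies in $\mathcal P$; this contradicts the maximality of $(V,\psi)$. Therefore $V=G$ and $\psi\colon G\hookrightarrow H$ is the desired order embedding.

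I expect the main obstacle to be the no-maximum case, which rests on the structural fact that a Hahn group is maximally complete, i.e. every pseudo-Cauchy sequence has a pseudo-limit; this is proved by forming the componentwise limit and checking that its support is still well-ordered, and it is precisely where the ``largeness'' of the Hahn group is used (and also why the embedding is far from unique). The remaining ingredients --- Hölder's theorem, the effect of passing to the divisible hull on ranks and components, and the bookkeeping relating signs to leading coefficients --- are routine.
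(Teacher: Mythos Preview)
The paper does not actually prove this theorem: it is quoted as a classical result due to Hahn and restated more precisely as Theorem~\ref{theo:hahn2}, with the reader referred to \cite{fuchs:partial_ord}, \cite[Theorem~4.C]{glass:part-ordered-gr} and \cite[Theorem~0.26]{kuhl:ord-exp} for proofs. So there is no ``paper's own proof'' to compare against; the relevant question is only whether your argument is sound.

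Your outline is the standard proof and is correct. The reduction to the divisible hull, the identification of the Archimedean components via H\"older, the Zorn argument on valuation-compatible $\Q$-linear partial embeddings, and the dichotomy on whether $v(g-V)$ attains its supremum are exactly the classical steps (this is essentially the proof one finds in the references above). The one point worth tightening is the ``maximum attained'' case: you should make explicit that after translating so that $v(g)=\max v(g-V)$, the residue $r$ of $g$ cannot equal the residue of any $x\in V$ with $v(x)=v(g)$ \emph{because} $V$ is a $\Q$-subspace --- otherwise $v(g-x)>v(g)$ would contradict maximality --- and that this is precisely what makes the leading coefficient of $\psi(q g+x)$ nonzero whenever $v(x)=v(g)$. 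With that sentence added, the verification that the extended map is still valuation compatible goes through in all three subcases $v(x)\lessgtr v(g)$ and $v(x)=v(g)$.

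Your identification of the ``no-maximum'' step as the crux is also right: the maximal (spherical) completeness of Hahn groups is exactly the structural input that makes the pseudo-limit exist, and it is the reason the embedding is highly non-canonical.
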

 To quote Hahn himself (p. 614-615, translated by ourselves):\\
`` \emph{We want to show that to the sizes in an arbitrary non Archimedian system of orders of magnitude, in unambiguous way symbols can be assigned of the form
\begin{center}
$\displaystyle\sum a_\gamma e_\gamma$
\end{center}
where $a_\gamma$ means a real number, $e_\gamma$ are symbols for a ranking (``units") and summation applies to well-ordered amounts.}"\\
For a survey on this topic, we recommend \cite{ehrlich:hahn-nichtarchim}.

Almost at the same time, G.H. Hardy published his monograph enhancing P. Du Bois-Reymond's work on \emph{orders of infinity} \cite{hardy}. The central question that this work addresses is: even though orders of growth of real functions have no bound (by Du Bois-Reymond's results), can we find asymptotic scales to describe them ? With the field of \emph{logarithmic-exponential functions}, Hardy provides an important example of such a generalized asymptotic scale. It is what is called now a \emph{Hardy field} (see \cite[Section V, Appendix]{bour:unevar} and below). 

In his seminal article \cite{krull:allg-bewertungstheorie}, W. Krull provides a common framework for these works, and many others: \emph{abstract valuation theory}. Recall that, given a valued field, an \textbf{immediate extension} of it is an extension of valued field with same valued group and residue field. In \cite[Section 13]{krull:allg-bewertungstheorie}, fields of generalized series are key examples of \textbf{maximally valued fields} - i.e. fields which admit no proper immediate extension - with any given value group and residue field. Note that Krull proves in the same article that \emph{any} valued field possesses an immediate maximally valued extension, and puts the following crucial question (see the end of Section 13): \\
``\emph{Under which conditions a maximal extension of a given valued field may be viewed as a generalized series field?}"

The answer for valued fields having same characteristic as its residue field is given by Kaplansky in \cite{kap,kap2} almost ten years after. Recall that two valued fields are said to be \textbf{analytically isomorphic} if there exists a value preserving isomorphism between them. More precisely:

\begin{theorem}[Kaplansky's embedding]\label{theo:kap}
In the equal characteristic case, every valued field is analytically isomorphic to a subfield of a suitable generalized series field
\end{theorem}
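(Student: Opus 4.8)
The plan is to reduce the statement to the case of a \emph{maximally valued} field and then build the embedding by transfinite recursion, exploiting that the target $k((\Gamma))$ is itself maximally valued.

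So let $(K,v)$ be a valued field with value group $\Gamma$ and residue field $k$, and suppose $\mathrm{char}\,K=\mathrm{char}\,k$. By Krull's theorem \cite{krull:allg-bewertungstheorie} recalled above, $K$ has a maximal immediate extension $\hat K$, with the same value group $\Gamma$ and residue field $k$; since it suffices to embed $\hat K$ analytically into $k((\Gamma))$ and then restrict to $K$, we may assume $K=\hat K$ is maximally valued (in particular henselian). First I would produce a \emph{coefficient field}, i.e.\ a field embedding $\iota\colon k\hookrightarrow\mathcal{O}_v$ splitting the residue map. In residue characteristic $0$ this is routine: one lifts a transcendence basis of $k$ arbitrarily to units of $\mathcal{O}_v$ and extends the lift along a Zorn chain using Hensel's lemma at each algebraic step, all residue extensions being separable. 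In residue characteristic $p$ this already requires an additional hypothesis --- in effect Kaplansky's ``Hypothesis A'', or the liberty to enlarge $k$ --- which is part of what ``suitable'' absorbs.

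Now consider the poset of valuation-preserving field embeddings $f\colon L\to k((\Gamma))$ with $\iota(k)\subseteq L\subseteq K$ and $f$ restricting to the tautological inclusion of $k$ as the constants of $k((\Gamma))$, ordered by extension; unions of chains stay in this poset, so Zorn's lemma gives a maximal element $f$. I claim its domain $L$ is all of $K$. Since $\mathrm{res}(L)\supseteq k=\mathrm{res}(K)$, we get $\mathrm{res}(L)=\mathrm{res}(K)$, so no residue-enlarging step is left. Given $a\in K\setminus L$, one analyses the simple extension $L(a)/L$. If (possibly after translating $a$ by an element of $L$) it is a transcendental extension enlarging the value group, the target is found directly inside $k((\Gamma))$, whose value group is all of $\Gamma$: mapping $a$ to $t^{v(a)}$ and checking that this is valuation-preserving does it. Otherwise $L(a)/L$ is immediate, so $a$ is a pseudo-limit of a pseudo-Cauchy sequence $(a_\rho)$ from $L$ with no pseudo-limit in $L$; its image $(f(a_\rho))$ is pseudo-Cauchy in $k((\Gamma))$, hence --- $k((\Gamma))$ being maximally valued --- has a pseudo-limit $b$ there, and transporting Kaplansky's dichotomy (transcendental type versus algebraic type) through the isomorphism $L\simeq f(L)$ shows that $a\mapsto b$ extends $f$ to $L(a)$, with $b$ chosen, in the algebraic-type case, to be a root of the transported associated polynomial. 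In every case this contradicts maximality of $f$, so $L=K$.

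Hence $f\colon\hat K\to k((\Gamma))$ is a valuation-preserving embedding, and restricting to the original $K$ proves it analytically isomorphic to a subfield of $k((\Gamma))$. (In fact $f$ is onto, since $k((\Gamma))$ is then an immediate extension of the maximally valued field $f(\hat K)$; this yields the sharper statement that, in equal characteristic, the maximally valued fields are up to analytic isomorphism exactly the generalized series fields --- but only the ``subfield'' form is needed.) I expect the main obstacle to be the algebraic-type branch of the immediate step: one must show that $(f(a_\rho))$ admits inside $k((\Gamma))$ a pseudo-limit that is simultaneously a root of the relevant polynomial over $f(L)$ --- Kaplansky's key lemma on pseudo-Cauchy sequences of algebraic type. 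This, together with the coefficient-field construction, is exactly where the equal-characteristic hypothesis (and, in characteristic $p$, Hypothesis A) is indispensable; in mixed characteristic the analogue fails outright, since $\Q_p$ is not analytically isomorphic to a subfield of any $\mathbb{F}_p((\Gamma))$.
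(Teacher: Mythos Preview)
The paper does not contain a proof of this theorem: Theorem~\ref{theo:kap} is stated as a classical result and attributed to Kaplansky \cite{kap,kap2}, with the surrounding discussion only explaining what ``suitable'' means (factor sets may be needed in positive residue characteristic). So there is nothing in the paper to compare your argument against.

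That said, your sketch is recognisably the standard Kaplansky route and is broadly sound. Two points deserve tightening. First, your case analysis for extending $f$ past a maximal $L$ is incomplete: you treat ``transcendental enlarging the value group'' and ``immediate'', but an element $a\in K\setminus L$ may be \emph{algebraic} over $L$ with $L(a)/L$ not immediate (it may enlarge the value group within its divisible hull). This case is handled by henselianity of $k((\Gamma))$ together with the equal-characteristic hypothesis (and, in characteristic $p$, Hypothesis~A or the factor-set device), but it should be named explicitly rather than folded into the transcendental case. Second, your parenthetical claim that $f\colon\hat K\to k((\Gamma))$ is onto relies on the \emph{uniqueness} of maximal immediate extensions, which is again a consequence of Hypothesis~A (automatic in residue characteristic~$0$, not in general); as you note, this is not needed for the statement as given, but the claim as written is too strong without that caveat.
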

``Suitable" means that in certain cases, one might need \emph{factor sets} to express the multiplication rule for monomials of the generalized series. But in the context we are interested in (real closed or algebraically closed zero characteristic residue field), these are not needed.

Note that there is also a result for the mixed characteristic case, that we leave aside since we are mostly interested in the zero characteristic for differential structures. Just keep in mind that generalized series are not less represented in number theory  (see e.g. \cite{schilling:arithm-gen-series, poonen:max-complete-fields, kedlaya_alg-cl-series-posit-char, kedlaya:gen-series-p-adic-alg-closure,
mourgues-ressayre_rcf-integ-part,berarducci:factor-gen-series})

For the same reason, we will not detail the various results for what are called now \emph{Mal'cev-Neumann series} \cite{neumann:ord-div-rings, malcev:embed-gr-algebras} (when $k$ is only supposed to be a ring, and $\Gamma$ is not necessarily commutative)  (see e.g. \cite{rib:properties-gen-series, rib:neumann-regular-gen-series}).\\

As for the classical formal power series, generalized series have great algebraic properties \cite{rib:series-fields-alg-closed}. E.g., they are always Henselian valued fields. If $\Gamma$ is divisible and $k$ is real closed,  respectively algebraically closed, then so is $k((\Gamma))$. As wrote S. Maclane \cite{maclane:universal-series}: \emph{generalized power series are universal as valued fields}.

In topological terms, a valued field $(K,v)$ is an \emph{ultrametric} space. In this context, generalized series fields are \textbf{spherically complete}: the intersection of any decreasing sequence of balls is nonempty. This holds if and only if every \emph{pseudo-Cauchy sequence} in $K$ has a pseudo limit in $K$, which means that the field $K$ is maximally valued. 
In a spherically complete space, many of the classical results of functional analysis hold, as the Hahn-Banach, Banach-Steinhaus and Open Mapping Theorems \cite{schneider:nonarchim-funct-analysis}. In particular, one has a \emph{Banach Fixed Point Theorem} \cite{p-crampe-rib_FP-gen-series, kuhl:Hensel-lemma}. \\

At the interplay between model theory and geometry sits the  \emph{order minimal} or \emph{o-minimal} geometry, a generalization of \emph{semialgebraic} and \emph{subanalytic} geometry \cite{vdd:tame}. There, generalized series provide non standard models for various o-minimal theories, like certain \emph{expansions} of the field of real numbers: by the power functions corresponding to a subfield of the reals \cite{miller:power}; by the global real exponential function possibly with restricted analytic functions \cite{wil:modelcomp,dmm:exp} ;  by solutions of Pfaffian differential equations \cite{speiss:clos}. Concerning the o-minimal theory with the global exponential function, there are several similar constructions of non standard models of it based on generalized series: see Section \ref{sect:EL-series} and \cite{vdd:LE-pow-series, kuhl:ord-exp, vdh:transs_diff_alg} for details. Moreover, generalized series are themselves used to build other o-minimal structures, by expanding the field of reals by \emph{multisummable series} \cite{vdd:speiss:multisum}, by \emph{convergent generalized series} \cite{vdd:speiss:gen}.

O-minimality tells us that the definable subsets of the structure have finitely many connected components, justifying the \emph{tameness} of such geometry in the sense of \cite{grothendieck:esquisse}. Other important tameness results about differential equations may be put in parallel: the proof of the Dulac conjecture (finiteness of the number of limit cycles of a planar polynomial vector field: Hilbert 16th problem, part 2) using \emph{transseries} (see \cite{ecalle:dulac, vdh:transs_diff_alg} and Section \ref{sect:EL-series} below); the desingularization result for 3-dimensional real analytic vector fields in \cite{cano_moussu_rolin} using rank 1 generalized series.\\

In o-minimal geometry as for differential equations, another important algebraic object is omnipresent: the \textbf{Hardy fields}, i.e. fields of germs at $+\infty$ of real functions closed under differentiation. As the germs in such a field are non oscillating, they carry a total order and therefore the corresponding \emph{natural convex valuation}  (the valuation ring is the convex hull of the constant functions), so they are \textbf{valued differential fields} i.e. fields equipped with a valuation and a derivation. Moreover, there are specific connections between the derivation and the valuation: the valuation is \textbf{differential with a certain principle (*)} \cite[p.303 and 314]{rosenlicht:diff_val}. This means exactly with our terminology that the derivation is a \textbf{Hardy type derivation} (Definition \ref{defi:hardy_deriv}). Hardy fields are key tools for the study of asymptotic problems. For examples, take $\R(x)$ or $\R(x^{\R},\exp(x)^\R)$ or the above cited Hardy's log-exp functions. Also, for any  o-minimal structure over $\R$, the germs at $+\infty$ of unary definable functions form a Hardy field \cite{vdd:aschenbrenner:asymptotic}. As another important example, in the context of an isolated singularity of a real analytic vector field, take the field of meromorphic functions evaluated on a germ of \emph{non oscillating} integral curve \cite{cano_moussu_rolin}. 

Hardy fields may also be generated by adjoining to a given Hardy field certain solutions of differential equations \cite{rosenlicht:hardy_fields,ros:order2-ODE, kuhl:Hensel-lemma, p-crampe-rib:diff-equ-val-fields}. Note that by  \cite{bosher:hardy} there exist also infinite rank Hardy fields that are not \emph{exponentially bounded}, i.e. for which any element is bounded by some iterate of $\exp$. In \cite{vdd:asch:H-fields-liouv-ext,vdd:asch:liouv-cl-H-fields}, the authors have resumed and enhanced the valuation theoretic approach of M. Rosenlicht in the context of \emph{ordered differential fields}: the notion of \textbf{H-fields} is an axiomatized version of Hardy fields, as well as is the notion of \emph{valued field with a Hardy type derivation}. More precisely, the derivation in such H-fields is of Hardy type (see (HF1), (HF2) following Definition \ref{defi:hardy_deriv}). For a nice survey on these topics, see \cite{vdd:aschenbrenner:asymptotic}. \\

How far can one push the connection between Hardy fields and generalized series fields ? For instance, we showed in  \cite{matu:puiseux-diff_rg-fini} that one can solve differential equations over finite rank generalized series similarly as over finite rank Hardy fields. In \cite[Corollary 3.12]{dmm:LE-series}, the authors prove that the infinite rank Hardy field of the above cited o-minimal structure $\R_{an,\exp}$ (the field of reals expanded by the global exponential and restricted analytic functions \cite{dmm:exp}) embeds as ordered differential field into the field of \emph{logarithmic-exponential series}. Recently, J. van der Hoeven introduced in \cite{vdh:transserial-hardy-fields} the notion of a \emph{transserial Hardy field}, namely a Hardy field which is differentially order isomorphic to a subfield of the field of \emph{transseries}. There he proves \cite[Theorem 9]{vdh:transserial-hardy-fields} \cite[Theorem 1.1]{asch-vdd-vdh:mod-theo-transseries} that : \emph{the field of transseries that are differentially algebraic over $\mathbb{R}$ is isomorphic as ordered differential field to some Hardy field.} 
Even more recently \footnote{The author thanks Matthias Aschenbrenner for having indicated him this preprint}, M. Aschenbrenner, L. van den Dries and J. van der Hoeven state the following result for H-fields - so in particular for Hardy fields - \cite[Theorem 4.1]{asch-vdd-vdh:mod-theo-transseries}: \emph{every real closed H-field has an immediate maximally valued H-field extension.} Recall that the derivation in an H-field is a derivation of Hardy type (\ref{defi:hardy_deriv}).  By Kaplansky's embedding Theorem \ref{theo:kap}, this implies that:
\begin{theorem}\label{theo:kap-diff}
Any H-field  is analytically and differentially isomorphic to a subfield of a suitable field of generalized series endowed with \emph{some} Hardy type derivation.
\end{theorem}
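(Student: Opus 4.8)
The plan is to chain together the two facts recalled just above: the Aschenbrenner--van den Dries--van der Hoeven existence theorem \cite[Theorem 4.1]{asch-vdd-vdh:mod-theo-transseries} for immediate maximally valued H-field extensions, and Kaplansky's embedding Theorem \ref{theo:kap}. Let $(K,v,\partial)$ be an H-field, so that $\partial$ is a Hardy type derivation (Definition \ref{defi:hardy_deriv}). By the standard closure properties of H-fields (see \cite{vdd:aschenbrenner:asymptotic}), the real closure $K^{\mathrm{rc}}$ carries unique extensions of $v$ and of $\partial$ making it again an H-field, and $K\hookrightarrow K^{\mathrm{rc}}$ is an embedding of valued differential fields; since a differential embedding of $K^{\mathrm{rc}}$ onto a subfield of some differential generalized series field restricts to one of $K$, it is enough to treat $K^{\mathrm{rc}}$. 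I therefore assume from now on that $K$ is real closed.

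Apply \cite[Theorem 4.1]{asch-vdd-vdh:mod-theo-transseries} to $K$: there is an immediate maximally valued H-field extension $(L,v_L,\partial_L)\supseteq(K,v,\partial)$, so $L$ is maximally valued, $v_L$ and $\partial_L$ extend $v$ and $\partial$, and $\partial_L$ is again of Hardy type. As an ordered field $L$ has characteristic zero, and since its valuation ring contains $\Q$ so does its residue field $k$; thus $L$ has equal characteristic zero. Moreover $L$ is immediate over the real closed field $K$, hence its value group $\Gamma$ is divisible and $k$ is real closed.

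Next invoke Kaplansky's theorem in the form valid for maximally valued fields: a maximally valued field of equal characteristic zero is \emph{analytically isomorphic} -- not merely embeddable as a subfield -- to the generalized series field $k((\Gamma))$ over its own residue field $k$ and value group $\Gamma$, no factor sets being needed (cf.\ the comment following Theorem \ref{theo:kap}). Fix such an analytic isomorphism $\iota\colon L\xrightarrow{\ \sim\ }k((\Gamma))$ and transport the derivation along it: $D:=\iota\circ\partial_L\circ\iota^{-1}$ is a derivation of $k((\Gamma))$. Being a Hardy type derivation is a property of the pair $(v,\partial)$ (Definition \ref{defi:hardy_deriv}) preserved by value-preserving isomorphisms, and $\iota$ is value preserving; hence $\big(k((\Gamma)),D\big)$ is a generalized series field equipped with a Hardy type derivation. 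Since $k$ is real closed and $\Gamma$ divisible, $k((\Gamma))$ is itself real closed, so $\iota$ automatically respects the orderings as well; this is not needed for the statement, but shows the embedding below is one of ordered valued differential fields.

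Finally, $\iota$ composed with the inclusion $K\hookrightarrow L$ is an embedding of valued differential fields of $K$ onto a subfield of $\big(k((\Gamma)),D\big)$, which is exactly Theorem \ref{theo:kap-diff}. The only step beyond routine transport of structure and the standard closure properties of H-fields is the appeal to the \emph{isomorphism} version of Kaplansky's theorem for the maximally valued field $L$: it is this that lets the Hardy type derivation $\partial_L$ be carried onto the whole ambient series field $k((\Gamma))$ rather than merely onto a subfield -- and hence is the point to watch.
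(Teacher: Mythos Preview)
Your argument is correct and follows the same route the paper sketches just before the statement: pass to the real closure, apply \cite[Theorem 4.1]{asch-vdd-vdh:mod-theo-transseries} to obtain an immediate maximally valued H-field extension, then use Kaplansky's theorem to identify this maximal extension with a full series field $k((\Gamma))$ and transport the derivation. You have in fact made explicit the one point the paper glosses over, namely that the maximality of $L$ upgrades Kaplansky's embedding to an \emph{isomorphism} onto $k((\Gamma))$, which is precisely what is needed to equip the ambient series field (and not merely the image of $K$) with a Hardy type derivation.
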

The authors assert that the derivation on such differential maximally valued immediate extension needs not be unique (see the proof of their Proposition 5.4), whereas the extension is unique as a valued field by Kaplansky's work. Now follows an important question: what kind of derivation can we expect for the generalized series field? For generalized series, one may not only require the derivation to mimic the valuative properties of the derivation in a Hardy field. One may expect the derivation to  behave the same way as the derivation for the classical formal power series: commutation with infinite sums, real powers go to the coefficients.  This is what we call  a \textbf{series derivation}: see Definition \ref{defi:series-deriv}. Note also that Theorem \ref{theo:kap-diff} does not say anything about immediate maximally valued extensions of Hardy field \emph{within} the class of Hardy fields: it says that any Hardy field admits an immediate maximal H-field extension. In fact, as far as the author knows, there is no example of a maximally valued Hardy field. Together with S. Kuhlmann, we propose the following conjecture, problem and questions:
\begin{description}
 \item[Differential Kaplansky embedding conjecture for Hardy fields.] \emph{Any Hardy field is analytically and differentially isomorphic to a subfield of a suitable generalized series field endowed with a \emph{series derivation of Hardy type}}.
\item[Differential Kaplansky embedding problem.] \emph{Describe which valued differential fields are analytically and differentially isomorphic to a subfield of a suitable generalized series field endowed with a \emph{series derivation of Hardy type}}.
\item[Question 1.] On what subfield of a generalized field is it sufficient to define a series derivation of Hardy type to ensure that it extends uniquely ?  In particular, does the series derivation of Hardy type naturally defined on the field of transseries (grid-based or well-ordered; see Section \ref{sect:EL-series}) extends uniquely to the maximally valued extension ? 
\item[Question 2.] Can we obtain the same result as in Theorem \ref{theo:kap-diff} for valued field endowed with a Hardy type derivation (Definition \ref{defi:hardy_deriv}) ? Or with a differential valuation in the sense of Rosenlicht (only axioms (HD1), (HD2) of \ref{defi:hardy_deriv})  ?
\end{description}

 In the following Sections \ref{sect:deriv} and \ref{sect:hardy-deriv}, the author will survey the results in \cite{matu-kuhlm:hardy-deriv-gener-series} describing how to endow generalized series fields with such \textbf{series derivations of Hardy type}. 
In the case where the generalized series field is ordered, it can thus be endowed with a H-field structure. This has already been done in \cite[Section 11]{vdd:asch:liouv-cl-H-fields} in the particular case of a value group $\Gamma$ divisible with a property called (*) (i.e.  admitting a valuation basis in \cite{kuhl:ord-exp}) and carrying an \emph{asymptotic couple} in the sense of \cite{rosenlicht:val_gr_diff_val2}. In \cite{matu-kuhlm:hardy-deriv-EL-series} we continued our study in the case of \emph{exponential-logarithmic series fields} in the sense of \cite{kuhl:ord-exp} (see Sections \ref{sect:prelog} and \ref{sect:EL-series}), to provide a large family of  \emph{exponential H-fields}. This has already been done for LE-series field and certain transseries fields: see Section \ref{sect:EL-series} and \cite{vdd:LE-pow-series, kuhl:ord-exp, vdh:transs_diff_alg} for details.

\begin{remark}
As will be shown in the next section, such series derivation for generalized series is determined by the definition of the logarithmic derivatives of a certain family of (generalized) monomials. Consider a Hardy field $\mathbb{H}$. Any Kaplansky embedding (\ref{theo:kap}) $f: \mathbb{H}\rightarrow k((\Gamma))$ of $\mathbb{H}$ seen as a valued field will fix these monomials and their logarithmic derivatives. So, the derivation $\delta:=f\circ d$ on $f(\mathbb{H})$ inherited from the derivation $d$ on $\mathbb{H}$ extends \emph{uniquely} to a series derivation on $k((\Gamma))$. So, another way of formulating our conjecture is:
\begin{description}
    \item[Differential Kaplansky embedding conjecture version 2]\emph{For any Hardy field $\mathbb{H}$, there is a Kaplansky embedding (\ref{theo:kap}) $f:\mathbb{H}\hookrightarrow k((\Gamma))$ such that the resulting derivation on $f(\mathbb{H})$ lifts to a Hardy type series derivation on $k((\Gamma))$.}\end{description}
\end{remark}

\section{What kind of derivations for generalized series ?}\label{sect:deriv}

We follow the ideas in \cite{matu-kuhlm:hardy-deriv-gener-series,matu-kuhlm:hardy-deriv-EL-series}, but with \emph{additive notations} as for the classical Krull valuations. 

By a \textbf{derivation} on a field $K$, we mean a map $d:K\rightarrow K$ which is linear and verifies the Leibniz rule : $d(ab)=d(a)b+ad(b)$. The derivation for the usual power series verifies two further key properties :
\begin{itemize}
\item  it \emph{commutes with infinite sums} $d\left(\displaystyle\sum_n a_n x^n\right)=\displaystyle\sum_n a_n d(x^n)$;
\item it \emph{generalizes the Leibniz rule to real powers} $d(x^\alpha)=\alpha x^{\alpha-1}d(x)=\alpha x^\alpha d(x)/x$, with in particular $d(1)=0$.
\end{itemize} 
How can we generalize these properties to the case of our fields $k((\Gamma))$ ?
If we impose that $d$ is \textbf{strongly linear}, that is: $$d\left(\displaystyle\sum_\alpha a_\alpha t^\alpha\right)=\displaystyle\sum_\alpha a_\alpha d(t^\alpha),$$ two questions arise:
\begin{enumerate}
\item how do we define $d(t^\alpha)$, $\alpha\in \Gamma$, where $\Gamma$ is an arbitrary ordered Abelian group ?
\item how do we ensure that $\displaystyle\sum_\alpha a_\alpha d(t^\alpha)$ is itself well-defined and in $k((\Gamma))$ ?
\end{enumerate}
To answer question 1, we will apply the above cited Hahn's embedding theorem (\ref{theo:hahn1}). To any totally ordered (non trivial) Abelian group $(\Gamma,\leq )$, one can associate its \textbf{skeleton} $[\Phi,(A_\phi)_{\phi\in\Phi}]$ as follows. As before $\Phi$ denotes the ordered set of all Archimedian equivalence classes. The map $v_\Gamma:\Gamma\rightarrow\Phi\cup\{\infty\}$ defined by  $v_\Gamma(\alpha):=[\alpha]=\phi$ is the  corresponding \textbf{natural valuation} on $\Gamma$ (as ordered group). To any value $\phi\in\Phi$ corresponds two subgroups of $\Gamma$: $C_\phi:=\{\alpha\in\Gamma\ |\ v_\Gamma(\alpha)\geq \phi\}$ and $D_\phi:=\{\alpha\in\Gamma\ |\ v_\Gamma(\alpha)>\phi\}$, with the following properties:
\begin{itemize}
	\item $D_\phi\subsetneq C_\phi$;
	\item $(0)\subset \cdots D_\phi\subsetneq C_\phi\cdots \subset \Gamma$ forms the chain of \emph{isolated subgroups} of $\Gamma$;
	\item $A_\phi:=C_\phi/D_\phi$ is an Archimedian group, and therefore is (order isomorphic to) some subgroup of $\R$  (Hölder's theorem).
\end{itemize}
The groups $A_\phi$, $\phi\in\Phi$, are usually called the  \textbf{ribs} and $\Phi$ the \textbf{spine} of $\Gamma$. 

 Given a skeleton $[\Phi,(A_\phi)_{\phi\in\Phi}]$, one can always build the corresponding \textbf{Hahn group}, say $\displaystyle\sum_{\phi\in\Phi} A_\phi$. For any $\phi\in\Phi$, consider its index map $\mathds{1}_\phi:\Phi\rightarrow \{0,1\}$, $\phi^{-1}(1)=\{\phi\}$. An element of $\displaystyle\sum_{\phi\in\Phi} A_\phi$ is a formal sum $\displaystyle\sum_{\phi\in\Phi}\alpha_\phi \mathds{1}_\phi$ with $\alpha_\phi\in A_\phi$, $\phi\in\Phi$, and the support $\mathrm{supp}\ \alpha:=\{\phi\ |\ \alpha_\phi\neq 0\}$ that is well-ordered. The group law is the componentwise sum and the total ordering is lexicographical. We can give now a more precise version of Theorem \ref{theo:hahn1} (see e.g.  \cite{fuchs:partial_ord}, \cite[Theorem 4.C]{glass:part-ordered-gr} or \cite[Theorem 0.26]{kuhl:ord-exp}).
\begin{theorem}[Hahn's Embedding]\label{theo:hahn2}
$(\Gamma,\leq)$ embeds as an ordered group in the Hahn group $\displaystyle\sum_{\phi\in\Phi}\overline{A}_\phi$ where $\overline{A}_\phi$ is the divisible closure of $A_\phi$ in $\R$ (i.e. the rational vector subspace of $\R$ generated by $A_\phi$). 
\end{theorem}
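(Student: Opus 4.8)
First I would set up the proof as a transfinite amalgamation argument, after a preliminary reduction to the divisible case. \emph{Reduction.} Replace $\Gamma$ by its divisible hull $\Gamma\otimes_{\Z}\Q$, equipped with the unique ordering extending that of $\Gamma$. Since $n\alpha\sim_+\alpha$ for every integer $n\geq 1$, every element of $\Gamma\otimes_{\Z}\Q$ is Archimedian-equivalent to an element of $\Gamma$; hence the spine remains $\Phi$, while each rib $A_\phi=C_\phi/D_\phi$ is enlarged to a divisible Archimedian group in which every element is a rational multiple of one from $A_\phi$, that is --- once we fix, using H\"older's theorem, an order-embedding $A_\phi\hookrightarrow\R$ for each $\phi$ --- precisely the $\Q$-subspace $\overline{A}_\phi$ of $\R$. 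So it suffices to show: \emph{every divisible ordered Abelian group $G$ whose skeleton is $[\Phi,(\overline{A}_\phi)_{\phi\in\Phi}]$ admits a valuation- and rib-compatible order-embedding into the Hahn group $H:=\sum_{\phi\in\Phi}\overline{A}_\phi$}. Observe that $H$ itself is divisible with this very skeleton, so the claim says that $G$ embeds into the richest ordered group sharing its skeleton.

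Then I would apply Zorn's lemma to the poset $\mathcal{P}$ of all order-embeddings $\iota:G'\hookrightarrow H$ where $G'$ is a divisible subgroup of $G$, subject to: $v_H(\iota(\gamma))=v_\Gamma(\gamma)$ for all $\gamma\in G'$ (with $v_H(h):=\min(\mathrm{supp}\,h)$ the natural valuation on $H$), and, for each $\phi$, $\iota$ carries $\{\gamma\in G':v_\Gamma(\gamma)\geq\phi\}$ into $\{h\in H:v_H(h)\geq\phi\}$ inducing on the level-$\phi$ quotients the restriction of the fixed embedding $A_\phi\hookrightarrow\overline{A}_\phi$. This poset is nonempty ($G'=(0)$) and stable under unions of chains, so it has a maximal element $(G_0,\iota_0)$; it then remains to prove $G_0=G$. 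Assuming not, pick $\alpha\in G\setminus G_0$; since $G_0$ is divisible and $\alpha\notin G_0$ one has $G_0\cap\Q\alpha=(0)$, so $G_0\oplus\Q\alpha$ is a divisible subgroup of $G$, and it will be enough to extend $\iota_0$ to it, contradicting maximality.

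The one-generator extension is the heart of the argument, and the step I expect to be the main obstacle. I would consider $W:=\{v_\Gamma(\alpha-\gamma):\gamma\in G_0\}\subseteq\Phi$, and distinguish two cases. If $W$ has a largest element $\psi=v_\Gamma(\alpha-\gamma^{*})$, then --- because $\psi=\max W$ --- the class of $\alpha-\gamma^{*}$ in $A_\psi$ is not attained by any element of $G_0$ at level $\psi$, and one sets $\iota_0(\alpha):=\iota_0(\gamma^{*})+r\,\mathds{1}_\psi$, where $r\in\overline{A}_\psi\subseteq\R$ is the H\"older image of that class; a short case analysis (using that the H\"older map is order-preserving and $\overline{A}_\psi$ is divisible) shows the $\Q$-linear extension to $G_0\oplus\Q\alpha$ is again a valuation- and rib-compatible order-embedding. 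If $W$ has no largest element, I would pick $\gamma_\lambda\in G_0$ with $v_\Gamma(\alpha-\gamma_\lambda)$ strictly increasing and cofinal in $W$; then $(\iota_0(\gamma_\lambda))$ is a pseudo-Cauchy sequence in $H$, and \emph{here one invokes the key fact that the Hahn group $H$ is maximally valued, i.e.\ spherically complete}, to produce a pseudo-limit $h\in H$, after which $\iota_0(\alpha):=h$ works and extends $\Q$-linearly. The genuine difficulty --- and precisely the reason the target must be the \emph{full} Hahn group and not some smaller ordered group with the same skeleton --- lies in this last case: it is the existence of pseudo-limits that ensures the element we are forced to produce has well-ordered support, hence genuinely lies in $H$. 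Granting it, the maximality of $(G_0,\iota_0)$ is contradicted; therefore $G_0=G$ and $\iota_0$ is the sought embedding.
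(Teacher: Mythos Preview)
The paper does not actually prove Theorem~\ref{theo:hahn2}: it is stated as a classical result and the reader is referred to \cite{fuchs:partial_ord}, \cite[Theorem~4.C]{glass:part-ordered-gr} and \cite[Theorem~0.26]{kuhl:ord-exp} for a proof. So there is no ``paper's own proof'' to compare against.

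That said, your outline is essentially the standard argument one finds in those references: reduction to the divisible hull, a Zorn's-lemma maximality argument on valuation- and rib-compatible partial embeddings, and a one-generator extension step split according to whether the approximation set $W=\{v_\Gamma(\alpha-\gamma):\gamma\in G_0\}$ has a maximum. Your identification of the crucial ingredient --- that the full Hahn group is maximally valued (spherically complete), so pseudo-Cauchy sequences admit pseudo-limits --- is exactly right, and it is indeed the reason the target must be the whole Hahn group. The sketch is correct; the verifications you flag as ``a short case analysis'' (that the $\Q$-linear extension remains an order-embedding and preserves both the valuation and the rib identifications) are routine but should be written out carefully if you intend this as a complete proof, particularly the check in the pseudo-limit case that for \emph{every} $\gamma\in G_0$ one has $v_H(h-\iota_0(\gamma))=v_\Gamma(\alpha-\gamma)$ with matching signs at that level.
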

Note that the ordering of an ordered group extends uniquely to its divisible closure.  \emph{From now on, we consider $\Gamma$ as a subgroup of a given Hahn group $\displaystyle\sum_{\phi\in\Phi}\overline{A}_\phi$, which is itself a subgroup of $\displaystyle\sum_{\phi\in\Phi}\R$}. So, any element $\alpha\in\Gamma$ is written $\alpha=\displaystyle\sum_\phi \alpha_\phi \mathds{1}_\phi$, and $v_\Gamma(\alpha)=\min(\mathrm{supp}\ \alpha)\in\Phi\cup\{\infty\}$.

Returning to the generalized series field $k((\Gamma))$, we write the generalized monomials as: $$t_\phi:=t^{\mathds{1}_\phi}\ \ \ \ \mathrm{and}\ \ \ \ t^\alpha=t^{\sum_\phi \alpha_\phi \mathds{1}_\phi}:=\prod_\phi  t_\phi^{\alpha_\phi}.$$ The multiplication rule is componentwise, and we will also speak of the support $\textrm{supp}\ t^\alpha$ of such formal product: $\textrm{supp}\ t^\alpha:=\textrm{supp}\ \alpha=\{\phi\ |\ \alpha_\phi\neq 0\}$ which is well-ordered in $\Phi$. Note that, in the case where the sum and the product are finite, one has a true equality $\sum_\phi \alpha_\phi \mathds{1}_\phi = \prod_\phi  t_\phi^{\alpha_\phi}$ between elements of $k((\Gamma))$. 

Now we define the derivative of $t^\alpha$, by imposing that it verifies a \textbf{strong Leibniz rule}, that is: $$d(t^\alpha)=t^\alpha\displaystyle\sum_\phi \alpha_\phi \frac{d(t_\phi)}{t_\phi}\  \textrm{ with in particular }\ d(t^0)=d(1):=0.$$ Subsequently, three new questions arise:
\begin{enumerate}
\item[3.] how do we define $d(t_\phi)$ for $\phi\in\Phi$, with $\Phi$ being an arbitrary ordered set ?
\item[4.] how do we make sense of $\alpha_\phi$ as coefficient in the series ?
\item[5.] how do we ensure that $\displaystyle\sum_\phi \alpha_\phi \frac{d(t_\phi)}{t_\phi}$ is itself well-defined and in $k((\Gamma))$ ?
\end{enumerate}

To answer question 3, one can set $d(t_\phi)\in k((\Gamma))$. In other words, one can pick \emph{a priori} any map $d:\tilde{\Phi} \rightarrow k((\Gamma))\backslash\{0\}$ where $\tilde{\Phi}:=\{t_\phi,\ \phi\in\Phi\}$.

To answer question 4, we need to impose that $\alpha_\phi\in k$ for any $\alpha\in\Gamma$ and $\phi\in \mathrm{Supp}\ \alpha\subset \Phi$. In other words, $k$ has to contain the union of the groups $\overline{A}_\phi$ in $\R$.  For simplicity, one can take $k=\R$ (as we did in \cite{matu-kuhlm:hardy-deriv-gener-series, matu-kuhlm:hardy-deriv-EL-series}) or $k=\C$. 

\begin{example}[The finite rank case]
In the finite rank case, say rank $r$ for some $r\in\N$, the given answers to questions 3 and 4 also solve questions 2 and 5. Indeed, write any element $\alpha\in\Gamma$ as $t^\alpha=t_1^{\alpha_1}t_2^{\alpha_2}\cdots t_r^{\alpha_r}$, and for any $a\in k((\Gamma))$:
$$\begin{array}{lcl}
 d(a)&=&d\left(\displaystyle\sum_{\alpha}a_\alpha t^\alpha\right)\\
&=& \displaystyle\sum_{\alpha}a_\alpha d(t^\alpha)\ \textrm{ (by strong linearity)}\\
&=&\displaystyle\sum_\alpha a_\alpha t^\alpha\left(\alpha_1\displaystyle \frac{d(t_1)}{t_1}+\cdots+\alpha_r\displaystyle\frac{d(t_r)}{t_r }\right)\ \textrm{ (by strong Leibniz rule)}\\
&=&\displaystyle\frac{d(t_1)}{t_1}\sum_\alpha (a_\alpha\alpha_1) t^\alpha+\cdots+\displaystyle\frac{d(t_r)}{t_r}\sum_\alpha (a_\alpha\alpha_r)t^\alpha
\end{array}$$
So $d(a)$ is well-defined whatever value in  $k((\Gamma))$ we choose for the $\displaystyle\frac{d(t_i)}{t_i}$'s. 
\end{example}

It remains to answer questions 2 and 5 in the infinite rank case.
\begin{definition}\label{defi:series-deriv}
We call \textbf{series derivation} on $k((\Gamma))$ any derivation $d$ on $k((\Gamma))$ that is strongly linear (solving question 2)  and that verifies a strong Leibniz rule (solving question 5).
\end{definition}
Thus the problem consists in finding a criterion on the map $d:\tilde{\Phi}\rightarrow k((\Gamma))$ so that the formulas in questions 2 and 5 involve always \textbf{summable families} of series. A family of series is said to be summable if the union of their support is well-ordered and if for any value in this union there are only finitely many series which have that value in their support. The desired criterion is stated and proved in  \cite[Theorem 3.7]{matu-kuhlm:hardy-deriv-gener-series}, using Ramsey's theory type arguments \cite[Exercise 7.5, p. 112]{rosen:lin-ord}: see Theorem \ref{theo:series-deriv}. Since this criterion is technical, firstly we consider for simplicity a particular case for which series derivations can be explicitly built. We suppose also for simplicity that $A_\phi=\R$ for any $\phi\in\Phi$. 

\begin{definition}
A map $\sigma:\Phi\rightarrow \Phi$ is said to be a \textbf{right shift} if it is order preserving and for any $\phi\in\Phi$, $\sigma(\phi)>\phi$.
\end{definition}


\begin{theorem}\label{theo:series-deriv1}
Suppose that $\Phi$ carries a right shift $\sigma:\Phi\rightarrow \Phi$. Define 
$d:\tilde{\Phi}\rightarrow k((\Gamma))$ by:
$$\frac{d(t_\phi)}{t_\phi}:= t^{\theta_\phi}=\displaystyle\prod_{n\geq 1}t_{\sigma^n(\phi)}^{\theta_{\phi,n}}$$
where $\theta_{\phi,n}\in\R$ with in particular $\theta_{\phi,1}<0$.\\
Then $d$ extends to a series derivation on $k((\Gamma))$ via strong Leibniz rule and strong linearity.
\end{theorem}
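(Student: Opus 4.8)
The plan is to reduce the statement to two summability assertions and then read off the derivation axioms. Put $\delta_\alpha:=\sum_{\phi\in\mathrm{Supp}\,\alpha}\alpha_\phi\,t^{\theta_\phi}$ for $\alpha\in\Gamma$, which is meaningful by the discussion of question~4 since $A_\phi=\R\subseteq k$ makes each $\alpha_\phi$ a legitimate coefficient. We shall set $d(t^\alpha):=t^\alpha\delta_\alpha$ (strong Leibniz rule) and then $d(a):=\sum_{\alpha\in\mathrm{Supp}\,a}a_\alpha\,d(t^\alpha)$ (strong linearity) for $a=\sum_\alpha a_\alpha t^\alpha$. Two things must be checked: (i) $\delta_\alpha\in k((\Gamma))$ for every $\alpha$ (question~5), and (ii) the family $\bigl(a_\alpha\,d(t^\alpha)\bigr)_{\alpha\in\mathrm{Supp}\,a}$ is summable for every $a$ (question~2). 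Granting (i)--(ii), $d$ is strongly linear by construction; it is a derivation because the Leibniz rule holds on monomials — from $(\alpha+\beta)_\phi=\alpha_\phi+\beta_\phi$ one gets $d(t^{\alpha+\beta})=t^\beta d(t^\alpha)+t^\alpha d(t^\beta)$ at once — and hence, by the strong bilinearity of the convolution product, on all of $k((\Gamma))$.

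For (i), the crucial point is that $\phi\mapsto\theta_\phi$ is a strictly increasing embedding of $\Phi$ into $\Gamma$: since $\sigma(x)>x$ for all $x$ we have $\sigma(\phi)<\sigma^2(\phi)<\cdots$, so $\mathrm{supp}\,\theta_\phi\subseteq\{\sigma^n(\phi):n\ge1\}$ has least element $\sigma(\phi)$, and $\theta_{\phi,1}\neq0$ gives $v_\Gamma(\theta_\phi)=\sigma(\phi)$; hence for $\phi<\phi'$ the difference $\theta_\phi-\theta_{\phi'}$ has leading coefficient $\theta_{\phi,1}<0$ at the index $\sigma(\phi)$, so $\theta_\phi<\theta_{\phi'}$. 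Therefore $\mathrm{Supp}\,\delta_\alpha=\{\theta_\phi:\phi\in\mathrm{Supp}\,\alpha\}$ is the order-isomorphic image of the well-ordered set $\mathrm{Supp}\,\alpha$ and each index occurs once, so $\delta_\alpha\in k((\Gamma))$, and then $d(t^\alpha)=t^\alpha\delta_\alpha\in k((\Gamma))$ has support $\alpha+\{\theta_\phi:\phi\in\mathrm{Supp}\,\alpha\}$ and valuation $\alpha+\theta_{v_\Gamma(\alpha)}$, which has the same $v_\Gamma$-value $v_\Gamma(\alpha)$ as $\alpha$ because $v_\Gamma(\theta_\phi)=\sigma(\phi)>\phi\ge v_\Gamma(\alpha)$ for each $\phi\in\mathrm{Supp}\,\alpha$.

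For (ii) I would prove that $B:=\{\alpha+\theta_\phi:\alpha\in\mathrm{Supp}\,a,\ \phi\in\mathrm{Supp}\,\alpha\}=\bigcup_{\alpha\in\mathrm{Supp}\,a}\mathrm{Supp}\,d(t^\alpha)$ is well-ordered and that each $\beta\in B$ arises from only finitely many $\alpha$. Finiteness of the fibres is easy: since $\sigma(\phi)>\phi$, the exponent $\theta_\phi$ vanishes at the index $\phi$, so $\phi\in\mathrm{Supp}(\alpha+\theta_\phi)=\mathrm{Supp}\,\beta$; as $\alpha=\beta-\theta_\phi$ is determined by $\beta$ and $\phi$, the admissible $\phi$ for a fixed $\beta$ form a subset of the well-ordered set $\mathrm{Supp}\,\beta$ whose image under the order-reversing map $\phi\mapsto\beta-\theta_\phi$ lies in the well-ordered set $\mathrm{Supp}\,a$, and a set that is both well-ordered and reverse-well-ordered is finite. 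For the well-orderedness of $B$, suppose $\beta_1>\beta_2>\cdots$ lies in $B$; since $\mathrm{Supp}\,a$ is well-ordered one passes (Ramsey-style) to a subsequence along which $\alpha^{(i)}:=\beta_i-\theta_{\phi_i}$ is non-decreasing, in fact strictly increasing (an eventually constant choice would force $\phi_i$ to descend inside a single well-ordered $\mathrm{Supp}\,\alpha$). Then $\theta_{\phi_i}=\beta_i-\alpha^{(i)}$ is strictly decreasing, hence $\phi_i$ is strictly decreasing in $\Phi$; combining this with the facts that $\beta_i$ is strictly decreasing, that $\phi_i\in\mathrm{Supp}\,\alpha^{(i)}$, and that $v_\Gamma(\theta_{\phi_i})=\sigma(\phi_i)>\phi_i$ yields a contradiction. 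Making this last step precise is exactly the Ramsey-type argument of \cite[Theorem~3.7]{matu-kuhlm:hardy-deriv-gener-series} (cf. \cite[Exercise~7.5]{rosen:lin-ord}), specialized to the explicit family $t_\phi\mapsto t_\phi t^{\theta_\phi}$.

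The step I expect to be the main obstacle is precisely this well-orderedness of $B$. In contrast to (i), it cannot be obtained as the order-preserving image of a single well-ordered set: the total support $\bigcup_{\alpha\in\mathrm{Supp}\,a}\mathrm{Supp}\,\alpha$ of a well-ordered subset of $\Gamma$ need not be well-ordered in $\Phi$, since it may carry infinite descending chains masked by sign changes of the leading coefficients. One therefore has to track the $\Gamma$-supports, the $\Phi$-supports and the relevant leading coefficients together, and it is the right-shift hypothesis $\sigma(\phi)>\phi$ — the fact that $d$ raises the value of each monomial by an element whose own $\Phi$-value is strictly larger — that excludes the pathological descending configurations. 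This is where Ramsey's theorem is invoked, and it is the technical heart of the theorem.
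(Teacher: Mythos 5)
Your construction is essentially the paper's own proof: you reduce everything to monomials by observing that $v_\Gamma(\theta_\phi)=\sigma(\phi)$ together with $\theta_{\phi,1}<0$ makes $\phi\mapsto\theta_\phi$ strictly increasing, so that $d(t^\alpha)$ is a well-defined series, and you then rule out a strictly decreasing sequence $\beta_i=\alpha^{(i)}+\theta_{\phi_i}$ by extracting a subsequence with $\alpha^{(i)}$ strictly increasing and hence $\phi_i$ strictly decreasing. Your finite-fibre argument (the order-reversing injection $\phi\mapsto\beta-\theta_\phi$ of a subset of the well-ordered set $\mathrm{supp}\,\beta$ into $\mathrm{Supp}\,a$) is a welcome addition which the paper's proof leaves implicit.

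The one place where you stop short is exactly the step you flag as the ``technical heart'': having assembled the facts ($\beta_i$ strictly decreasing, $\alpha^{(i)}$ strictly increasing, $\phi_i$ strictly decreasing, $\phi_i\in\mathrm{supp}\,\alpha^{(i)}$, $v_\Gamma(\theta_{\phi_i})=\sigma(\phi_i)>\phi_i$), you defer the contradiction to the Ramsey-type argument behind the general criterion (C1)--(C2). In this single-monomial case no Ramsey argument is needed, and the paper finishes in two lines, which you could reproduce verbatim: since $(\phi_i)$ is strictly decreasing and $\mathrm{supp}\,\alpha^{(1)}$ is well-ordered, there is $i_0$ with $\phi_{i_0}\notin\mathrm{supp}\,\alpha^{(1)}$; as $\phi_{i_0}\in\mathrm{supp}\,\alpha^{(i_0)}$, it lies in $\mathrm{supp}\bigl(\alpha^{(i_0)}-\alpha^{(1)}\bigr)$, whence $v_\Gamma\bigl(\alpha^{(i_0)}-\alpha^{(1)}\bigr)\leq\phi_{i_0}$. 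On the other hand $0<\alpha^{(i_0)}-\alpha^{(1)}=(\beta_{i_0}-\beta_1)+(\theta_{\phi_1}-\theta_{\phi_{i_0}})\leq\theta_{\phi_1}-\theta_{\phi_{i_0}}$, and $v_\Gamma\bigl(\theta_{\phi_1}-\theta_{\phi_{i_0}}\bigr)=\sigma(\phi_{i_0})$ because $\mathrm{supp}\,\theta_{\phi_1}\subset\{\sigma^n(\phi_1):n\geq1\}$ lies strictly above $\sigma(\phi_{i_0})$; since $0<x\leq y$ in $\Gamma$ forces $v_\Gamma(y)\leq v_\Gamma(x)$, this gives $\sigma(\phi_{i_0})\leq\phi_{i_0}$, contradicting the right-shift property. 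Alternatively, the facts you establish verify (C1) and (C2) verbatim for the family with one-element supports $\{\theta_\phi\}$, so you could instead just quote Theorem \ref{theo:series-deriv}; but then your direct sequence argument is redundant, and the interest of the present statement is precisely that it can be proved without the general Ramsey machinery.
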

\begin{proof}
Consider a monomial $ t^\alpha=\displaystyle\prod_{\phi\in\Phi}t_\phi^{\alpha_\phi}$ for some $\alpha\in\Gamma$ and apply the strong Leibniz rule to compute its derivative:
\begin{center}
$\begin{array}{lcl}
 d(t^\alpha)&=&t^\alpha\displaystyle\sum_{\phi\in\Phi}\alpha_\phi \displaystyle\frac{d(t_\phi)}{t_\phi}\\
&=&t^\alpha\displaystyle\sum_{\phi\in\Phi}\alpha_\phi \displaystyle\prod_{n\geq 1}t_{\sigma^n(\phi)}^{\theta_{\phi,n}}
\end{array}$
\end{center}
Note that, for any $\phi<\psi$, $\theta_\phi=v\left(\displaystyle\prod_{n\geq 1}t_{\sigma^n(\phi)}^{\theta_{\phi,n}}\right) <v\left(\displaystyle\prod_{n\geq 1}t_{\sigma^n(\psi)}^{\theta_{\psi,n}}\right)=\theta_\psi$ since $\sigma(\phi)<\sigma(\psi)$ and $\theta_{\phi,1}<0$. Therefore, the support of the sum in the right hand expression is well-ordered as well as the support of $\alpha$: we obtain a well defined series in $k((\Gamma))$.\\
Consider a series $a=\displaystyle\sum_{\alpha\in\Gamma}a_\alpha t^\alpha$ and apply the strong linearity to compute its derivative:
$$\begin{array}{lcl}
 d(a)&=&\displaystyle\sum_{\alpha\in\Gamma}a_\alpha d(t^\alpha)\\
&=&\displaystyle\sum_{\alpha\in\Gamma}a_\alpha t^\alpha\displaystyle\sum_{\phi\in\Phi}\alpha_\phi \displaystyle\frac{d(t_\phi)}{t_\phi}\\
&=&\displaystyle\sum_{\alpha\in\Gamma}\displaystyle\sum_{\phi\in\Phi}a_\alpha\alpha_\phi  t^{\alpha+\theta_\phi}
\end{array}$$
Suppose that the family $(a_\alpha\alpha_\phi  t^{\alpha+\theta_\phi})$ in the right hand sum is not summable, say there is a decreasing sequence $(\alpha_n+\theta_{\phi_n})_{n\in\N}$. Since $\mathrm{Supp}\ a$ is well-ordered, the sequence $(\alpha_n)_{n\in\N}$ is increasing, non ultimately stationary. So we would have for infinitely many $n$: 
	$$0<\alpha_{n+1}-\alpha_n\leq \theta_{\phi_n}-\theta_{\phi_{n+1}}$$
The corresponding sequence $(\phi_n)_{n\in\N}$ is strictly decreasing. So there exists $n_2\in\N$ such that $\phi_{n_2}\notin\mathrm{supp}\ \alpha_1$. So $\phi_{n_2}\in\mathrm{supp}\ (\alpha_{n_2}-\alpha_1)$, which implies that $v_\Gamma(\alpha_{n_2}-\alpha_1)\leq \phi_{n_2}$. But, by construction, we have $v_\Gamma(\theta_{\phi_1}-\theta_{\phi_{n_2}})=\sigma(\phi_{n_2})>\phi_{n_2}$. This is in contradiction with the fact that 	$0<\alpha_{n_2}-\alpha_1\leq \theta_{\phi_1}-\theta_{\phi_{n_2}}$.
\end{proof}

Note that we could have built other derivations by choosing a more complicated formula for $d(t_\phi)/t_\phi$, for instance a two monomials expression: $$\frac{d(t_\phi)}{t_\phi}:= t^{\theta_\phi}+t^{\tau_\phi} =\displaystyle\prod_{n\geq 1}t_{\sigma^n(\phi)}^{\theta_{\phi,n}}+ \displaystyle\prod_{n\geq 1}t_{\sigma^n(\phi)}^{\tau_{\phi,n}}$$ with $\theta_\phi<\tau_\phi<0$. In this case, for any $\phi_1<\phi_2$ we have  $\theta_1<\theta_2$ and $\tau_1<\tau_2$. We also have $v_\Gamma(\theta_1-\theta_2)>\phi_1$, $v_\Gamma(\tau_1-\tau_2)>\phi_1$, $v_\Gamma(\theta_1-\tau_2)>\phi_1$ and $v_\Gamma(\tau_1-\theta_2)>\phi_1$.  In fact, in the general case when $d(t_\phi)\in k((\Gamma))$, the same ingredients as in the preceding  proof apply provided the fact that, roughly speaking, \emph{for almost all} $\phi_1<\phi_2$:
\begin{itemize}
\item the right shift $\sigma$ lifts to a right shift between corresponding elements of $\mathrm{Supp}\ \displaystyle\frac{d(t_{\phi_1})}{t_{\phi_1}}$ and $\mathrm{Supp}\ \displaystyle\frac{d(t_{\phi_2})}{t_{\phi_2}}$. 
\begin{center}
\includegraphics[scale=0.6]{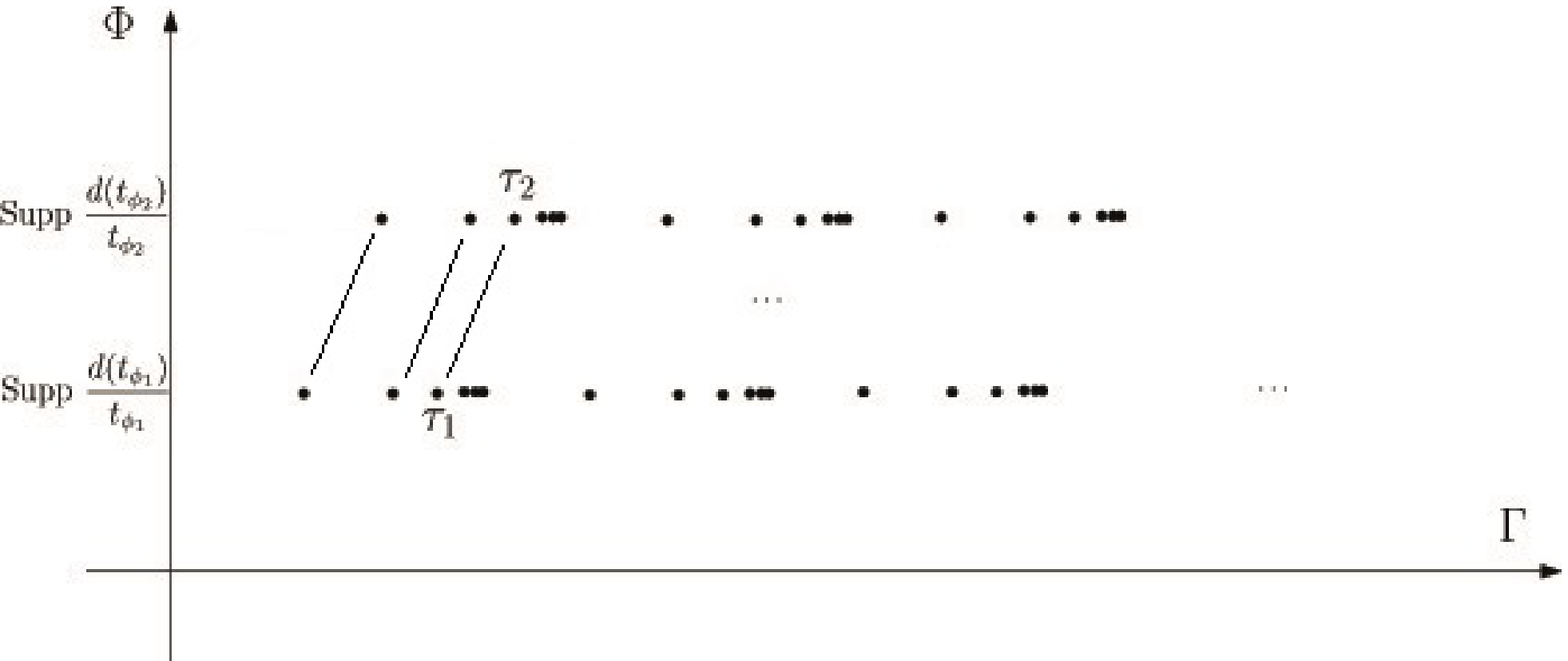}
\end{center}
\item \emph{for almost all} $\tau_1\in\mathrm{Supp}\ \displaystyle\frac{d(t_{\phi_1})}{t_{\phi_1}}$ and $\tau_2\in \mathrm{Supp}\ \displaystyle\frac{d(t_{\phi_2})}{t_{\phi_2}}$, we have $v_\Gamma(\tau_1-\tau_2)>\phi_1$.
\end{itemize}
The precise criterion is as follows \cite[Theorem 3.7]{matu-kuhlm:hardy-deriv-gener-series}:

\begin{theorem}\label{theo:series-deriv}
 A map $d :
\tilde{\Phi}\rightarrow k((\Gamma))\backslash\{0\}$ extends to a series
derivation on $k((\Gamma))$ via strong Leibniz rule and strong linearity if and only if both of the following conditions \emph{hold}:
\begin{description}
\item[{\bf(C1)}] for any strictly increasing sequence $(\phi_n)_{n\in\mathbb{N}}\subset\Phi$ and any sequence
 $(\tau_n)_{n\in\mathbb{N}}\subset\Gamma$ with for any $n$, $\tau_n\in\mathrm{Supp}\ \displaystyle\frac{d(t_{\phi_n})}{t_{\phi_n}}$, then $(\tau_n)_{n\in\mathbb{N}}$ cannot be decreasing (i.e. there is $N\in\N$ such that $\tau_{N+1}>\tau_N$).
\item[{\bf(C2)}] For any strictly decreasing sequences $(\phi_n)_{n\in\mathbb{N}}\subset\Phi$ and
$(\tau_n)_{n\in\mathbb{N}}\subset\Gamma$ such that for any $n$,
$\tau_n\in\mathrm{Supp}\ \displaystyle\frac{d(t_{\phi_n})}{t_{\phi_n}}$, there is $N\in\N$ such that $v_\Gamma(\tau_{N+1}-\tau_N) >\phi_{N+1}$.
\end{description}
\end{theorem}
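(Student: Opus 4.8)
The plan is to prove the equivalence by showing that conditions (C1) and (C2) are exactly what is needed to guarantee that the two problematic families — the one defining $d(t^\alpha)$ and the one defining $d(a)$ — are summable, thereby extending $d$ to a well-defined strongly linear map satisfying the strong Leibniz rule. I would first dispose of the ``only if'' direction: if (C1) fails, one produces a strictly increasing sequence $(\phi_n)$ and $\tau_n \in \mathrm{Supp}\ d(t_{\phi_n})/t_{\phi_n}$ with $(\tau_n)$ decreasing; then the monomial $t^\alpha$ with $\mathrm{Supp}\ \alpha = \{\phi_n : n \in \N\}$ (well-ordered since the $\phi_n$ increase) has $d(t^\alpha) = t^\alpha \sum_n \alpha_{\phi_n} d(t_{\phi_n})/t_{\phi_n}$ whose support contains the infinite descending chain $(\tau_n)$, so $d(t^\alpha) \notin k((\Gamma))$. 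Dually, if (C2) fails one gets a strictly decreasing $(\phi_n)$ and $\tau_n$ with $v_\Gamma(\tau_{n+1} - \tau_n) \le \phi_{n+1}$ for all $n$; building an appropriate series $a$ (with exponents accumulating suitably, in the spirit of the $\alpha_n$ in the proof of Theorem \ref{theo:series-deriv1}) one forces the family $(a_\alpha \alpha_\phi t^{\alpha + \theta_\phi})$ to be non-summable, so strong linearity cannot hold.

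For the ``if'' direction I would argue in two stages, mirroring the structure of the proof of Theorem \ref{theo:series-deriv1}. First, show that (C1) guarantees that for every $\alpha \in \Gamma$ the family $\bigl(\alpha_\phi\, t^{\theta_\phi^{(i)}}\bigr)$, where $t^{\theta_\phi^{(i)}}$ ranges over the monomials in $d(t_\phi)/t_\phi$ for $\phi \in \mathrm{Supp}\ \alpha$, is summable; the union of supports is a subset of $\bigcup_{\phi \in \mathrm{Supp}\ \alpha}\mathrm{Supp}\ d(t_\phi)/t_\phi$, and (C1) precisely rules out the infinite descending chains that would destroy well-orderedness, while finiteness of fibers follows because $\mathrm{Supp}\ \alpha$ is itself well-ordered and each $\mathrm{Supp}\ d(t_\phi)/t_\phi$ is. This makes $d(t^\alpha) := t^\alpha \sum_\phi \alpha_\phi\, d(t_\phi)/t_\phi$ a well-defined element of $k((\Gamma))$, so the strong Leibniz rule genuinely defines a map on monomials. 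Second, show that (C1) together with (C2) guarantees that the doubly-indexed family $\bigl(a_\alpha \alpha_\phi\, t^{\alpha + \theta_\phi^{(i)}}\bigr)_{\alpha \in \mathrm{Supp}\ a,\ \phi \in \mathrm{Supp}\ \alpha,\ i}$ is summable, so that $d(a) := \sum_\alpha a_\alpha\, d(t^\alpha)$ is well-defined; then $d$ is strongly linear by construction, and the Leibniz rule on general series follows from the Leibniz rule on monomials by strong linearity (a standard bilinear-continuity argument).

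The heart of the matter — and the step I expect to be the main obstacle — is the summability of this doubly-indexed family in the second stage. Suppose it is not summable: then there is an infinite descending chain $\gamma_n = \alpha_n + \theta_{\phi_n}^{(i_n)}$ in $\Gamma$ with $\alpha_n \in \mathrm{Supp}\ a$, $\phi_n \in \mathrm{Supp}\ \alpha_n$, and $\theta_{\phi_n}^{(i_n)} \in \mathrm{Supp}\ d(t_{\phi_n})/t_{\phi_n}$ (or else some value is hit infinitely often, which one reduces to the descending-chain case). Because $\mathrm{Supp}\ a$ is well-ordered, after passing to a subsequence $(\alpha_n)$ is (weakly) increasing; if it is eventually constant we are thrown back onto (C1) applied to that single $\alpha$, so assume it is strictly increasing, forcing $\theta_{\phi_n}^{(i_n)}$ to strictly decrease. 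Here is where Ramsey-type reasoning enters, exactly as flagged in the text via \cite[Exercise 7.5, p. 112]{rosen:lin-ord}: from the sequence $(\phi_n)$ in the linearly ordered set $\Phi$ one extracts an infinite monotone subsequence. A strictly increasing subsequence of $(\phi_n)$ contradicts (C1) (the $\theta_{\phi_n}^{(i_n)}$ would be a descending chain over an increasing sequence of indices); a strictly decreasing subsequence of $(\phi_n)$ activates (C2), giving some $N$ with $v_\Gamma(\theta_{\phi_{N+1}}^{(i_{N+1})} - \theta_{\phi_N}^{(i_N)}) > \phi_{N+1}$, and one then estimates $v_\Gamma(\alpha_{N+1} - \alpha_N)$: since $\phi_{N+1} \in \mathrm{Supp}\ \alpha_{N+1}$ lies strictly below everything in $\mathrm{Supp}\ \alpha_N$ after further thinning (using that $\mathrm{Supp}\ \alpha_N$ is well-ordered and the $\phi_n$ decrease past it), one gets $v_\Gamma(\alpha_{N+1} - \alpha_N) \le \phi_{N+1} < v_\Gamma(\theta_{\phi_{N+1}}^{(i_{N+1})} - \theta_{\phi_N}^{(i_N)})$, which contradicts $0 < \alpha_{N+1} - \alpha_N \le \theta_{\phi_N}^{(i_N)} - \theta_{\phi_{N+1}}^{(i_{N+1})}$ just as in the proof of Theorem \ref{theo:series-deriv1}. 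The delicate bookkeeping is in organizing these successive subsequence extractions and the several interlocking supports so that the Ramsey argument can be applied cleanly; this is precisely the ``technical'' content the author refers to, and it is where I would spend most of the effort.
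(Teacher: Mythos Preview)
The paper does not actually prove Theorem~\ref{theo:series-deriv}; it merely cites \cite[Theorem 3.7]{matu-kuhlm:hardy-deriv-gener-series} for the proof and indicates that the argument uses ``Ramsey's theory type arguments'' patterned on the explicit proof of the special case Theorem~\ref{theo:series-deriv1}. Your plan follows exactly that template --- the two-stage summability check, the reduction to an infinite bad sequence, the extraction of a monotone subsequence of $(\phi_n)$, and the valuation estimate $v_\Gamma(\alpha_{N+1}-\alpha_N)\le\phi_{N+1}$ contradicting the bound from (C2) --- so in spirit your approach coincides with what the paper points to.

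Two places where your sketch needs tightening. First, in the strictly-decreasing-$(\phi_n)$ branch of the ``if'' direction you write that $\phi_{N+1}\notin\mathrm{Supp}\,\alpha_N$ ``after further thinning'', but the index $N$ handed to you by (C2) depends on the sequence you feed in; the correct order is to thin \emph{first} (inductively choose $n_{k+1}>n_k$ with $\phi_{n_{k+1}}\notin\mathrm{Supp}\,\alpha_{n_k}$, which is possible because $(\phi_m)_{m>n_k}$ is strictly decreasing while $\mathrm{Supp}\,\alpha_{n_k}$ is well-ordered) and only \emph{then} apply (C2) to the thinned subsequence to obtain the index at which the contradiction fires --- this is exactly how the proof of Theorem~\ref{theo:series-deriv1} uses $\phi_{n_2}\notin\mathrm{supp}\,\alpha_1$. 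Second, your ``only if'' argument for (C2) is genuinely hand-waved: you must exhibit a concrete series $a\in k((\Gamma))$ whose derivative fails to be summable, which means producing exponents $\alpha_n\in\Gamma$ with $\phi_n\in\mathrm{Supp}\,\alpha_n$, $\{\alpha_n\}$ well-ordered, and $(\alpha_n+\tau_n)$ decreasing. Since $\Gamma$ is only assumed to be an arbitrary subgroup of the Hahn group (so need not contain convenient elements like $c\,\mathds{1}_{\phi_n}$ for small $c$), this construction is not automatic from ``the spirit of the $\alpha_n$'' and deserves an explicit argument; the condition $v_\Gamma(\tau_{n+1}-\tau_n)\le\phi_{n+1}$ for all $n$ is precisely what makes such $\alpha_n$ buildable, but you should say how.
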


\begin{example}\label{ex:hardy-series}
We illustrate (\ref{theo:series-deriv}) by the following basic example. Consider the Hardy field of germs at $+\infty$ of real functions $\mathbb{H}=\R(\log(x)^{\R},x^{\R},\exp(x)^{\R})$, which can be viewed also as a subfield of a field $\R((\Gamma))$ of generalized series of rank 3, with e.g. $t_1=\exp(-x)$, $t_2=1/x$, $t_3=1/\log(x)$ and $\Gamma=\R\times_{\mathrm{lex}}\R\times_{\mathrm{lex}}\R$. As an illustration of this embedding, take for instance the germ of $1/(\exp(x)-x)=\exp(-x)/(1-\exp(-x)x)$. It goes to $t_1/(1-t_1t_2^{-1})= t_1\sum_{n\in\N}t_1^{n}t_2^{-n}=\sum_{n\in\N}t_1^{n+1}t_2^{-n}$ since one has that $v(t_1t_2^{-1})>(0,0,0)$, or equivalently $\lim_{x\rightarrow +\infty} \exp(-x)x=0$.
Note that, for any $n\in\N_{>0}$, $0<t_1^n<t_2^n<t_3^n<\R$ for the ordering in $\mathbb{H}$ corresponding to $(0,0,0)<nv(t_3)=(0,0,n)<nv(t_2)=(0,n,0)<nv(t_1)=(n,0,0)<\infty$ in $\Gamma$ and  $\phi_1=v_\Gamma(v(t_1))=[(1,0,0)]<\phi_2=v_\Gamma(v(t_2))=[(0,1,0)]< \phi_3=v_\Gamma(v(t_3))=[(0,0,1)]<\infty$ for the Archimedian equivalence classes of $\Gamma$. We compute in $\mathbb{H}$:
\begin{center}
$\left\{\begin{array}{lcccll}
d(t_1)&=&-\exp(-x)&=& -t_1&\mathrm{with\ support}\ \{(1,0,0)\} \\
d(t_2)&=&\displaystyle\frac{-1}{x^2}&=&-t_2^2 &\mathrm{with\ support}\ \{(0,2,0)\}\\
d(t_3)&=&\displaystyle\frac{-1}{x(\log(x))^2}&=& -t_2t_3^2 & \mathrm{with\ support}\ \{(0,1,2)\}\\
\end{array}\right.$\\

$\left\{\begin{array}{lcccll}
\displaystyle\frac{d(t_1)}{t_1}&=&-1&& &\mathrm{with\ support}\ \{(0,0,0)\} \\
\displaystyle\frac{d(t_2)}{t_2}&=&\displaystyle\frac{-1}{x}&=&-t_2 &\mathrm{with\ support}\ \{(0,1,0)\}\\
\displaystyle\frac{d(t_3)}{t_3}&=&\displaystyle\frac{-1}{x\log(x)}&=& -t_2t_3 & \mathrm{with\ support}\ \{(0,1,1)\}\\
\end{array}\right.$
\end{center}
Consider for instance $t_2$ and $t_3$. One has that $v\left(\displaystyle\frac{d(t_2)}{t_2}\right)=(0,1,0) <(0,1,1)=v\left(\displaystyle\frac{d(t_3)}{t_3}\right)$ and  their difference equals $(0,0,1)$: $v_\Gamma(0,0,1)=\phi_3$ which is indeed bigger than $\phi_2$ in $\Phi$. 
\end{example} 

Our criterion in (\ref{theo:series-deriv}) permits to build families of series derivations for a large class of generalized series fields \cite[Section 5]{matu-kuhlm:hardy-deriv-gener-series}. Moreover we will be able to obtain in certain cases series derivations of Hardy type, as will be shown in (\ref{ex:generaux}). One of the central notions in such explicit constructions of derivations is the one of \emph{right-shift} as is already illustrated in 
Theorem \ref{theo:series-deriv1}. This notion was already central in the construction of pre-logarithms on generalized series fields: see Section \ref{sect:prelog} and \cite{kuhl:ord-exp} for details.

\section{On Hardy type series derivations}\label{sect:hardy-deriv}
\begin{definition}\label{defi:hardy_deriv} Let $(K,v,d)$ be a valued differential field. Denote by $\mathcal{O}_v$ the valuation ring and $\mathfrak{m}_v$ its maximal ideal. The derivation $d\ :\ K\rightarrow K\ $ is said to be a {\bf Hardy type derivation} if :
\begin{description}
\item[(HD1)] $\mathcal{O}_v=\ker d + \mathfrak{m}_v$;
\item[(HD2)] $d$ verifies \textbf{l'Hospital's rule}:
$\forall a,b\in K\backslash\{0\}$ with $v(a)\neq 0$ and $v(b)\neq 0$, \begin{center}
$v(a)\leq v(b)\Leftrightarrow v(d(a))\leq v(d(b))$.
\end{center}
\item[(HD3)] the \textbf{logarithmic derivation is compatible with the valuation}: \emph{$\forall  a,b\in K$ \begin{center}
$|v(a)|>|v(b)|>0\Rightarrow v\left(\displaystyle\frac{d(a)}{a}\right)\leq v\left( \displaystyle\frac{d(b)}{b}\right)$\\
with: $\ \ v\left(\displaystyle\frac{d(a)}{a}\right)=v\left(\displaystyle\frac{d(b)}{b} \right)\Leftrightarrow v(a)\sim_+ v(b)$
\end{center}}
\end{description}
\end{definition}

Axioms (HD1) and (HD2) are those which define a \emph{differential
valuation} in the sense of M. Rosenlicht (\cite[Definition p. 303]{rosenlicht:diff_val}). Axiom (HD3) corresponds to the Principle (*) in \cite[p. 992]{rosenlicht:val_gr_diff_val2}. This principle is itself an abstract version of properties obtained in \cite[Propositions 3 and 4]{rosenlicht:rank} and \cite[Principle (*) p. 314]{rosenlicht:diff_val} in the context of Hardy fields. Note that these axioms hold for fields that are not necessarily ordered, whereas H-fields (see below) and Hardy fields are. As M. Rosenlicht does, we believe that the tools we develop here may be used in the more general context of valued differential fields: see the various examples in \cite{rosenlicht:diff_val}, in particular Example 2 and 10.

The question we want to address here is: \begin{description}
\item \emph{How can we endow $k((\Gamma))$ with a Hardy type series derivation ?} \end{description} As we showed in \cite[Theorem 4.3 and Corollary 4.4]{matu-kuhlm:hardy-deriv-gener-series}, the answer depends only on the values $v(d(t_\phi)/t_\phi$), say $\theta_\phi$, $\phi\in\Phi$:
\begin{theorem}\label{theo:hardy-deriv} A series derivation $d$ on $k((\Gamma))$ verifies \emph{(HD2)} and \emph{(HD3)} if and only if
the following condition holds:\begin{description}
\item[(H3)] $\forall\phi_1,\phi_2\in\Phi$,\  $\phi_1<\phi_2\ \Rightarrow\ 
\theta_{\phi_1}<\theta_{\phi_2}\ and\ v_\Gamma\left(\theta_{\phi_1}-\theta_{\phi_2}\right)>\phi_1$.
\end{description}
In this case (HD1) holds with $k=\ker d$.
\end{theorem}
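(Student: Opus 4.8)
Write $\theta_\phi:=v\big(d(t_\phi)/t_\phi\big)\in\Gamma$ for $\phi\in\Phi$, and note first that by strong linearity together with $d(t^0)=0$ one gets $d(c)=0$ for every $c\in k$, so $k\subseteq\ker d$. The plan is to prove the equivalence by establishing, in both directions, the single formula $v\big(d(a)\big)=v(a)+\theta_{v_\Gamma(v(a))}$ for every $a$ with $v(a)\neq 0$ (equivalently $v\big(d(a)/a\big)=\theta_{v_\Gamma(v(a))}$), and then reading off (HD1)--(HD3) from it.

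\textbf{From (H3) to (HD1)--(HD3).} Assume (H3). Since the $\theta_\phi$ are strictly increasing in $\phi$, for $\gamma\in\Gamma\setminus\{0\}$ the expansion $d(t^\gamma)/t^\gamma=\sum_{\phi\in\mathrm{supp}\,\gamma}\gamma_\phi\,d(t_\phi)/t_\phi$ has a unique term of least valuation, the one indexed by $v_\Gamma(\gamma)=\min(\mathrm{supp}\,\gamma)$, so $v\big(d(t^\gamma)/t^\gamma\big)=\theta_{v_\Gamma(\gamma)}$. For a general nonzero $a$ with $v(a)=\gamma$, write $a=a_\gamma t^\gamma(1+u)$ with $a_\gamma\in k^{\times}$ and $v(u)>0$; since $d$ kills $k$, $d(a)/a=d(t^\gamma)/t^\gamma+d(u)/(1+u)$, and the crux is that $v\big(d(u)\big)>\theta_{v_\Gamma(\gamma)}$. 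This follows from $v\big(d(u)\big)\ge\min_{\beta\in\mathrm{Supp}\,u}\big(\beta+\theta_{v_\Gamma(\beta)}\big)$ together with the estimate $\beta+\theta_{v_\Gamma(\beta)}>\theta_{v_\Gamma(\gamma)}$ for every $\beta>0$: if $v_\Gamma(\beta)\ge v_\Gamma(\gamma)$ use monotonicity of $\theta$ and $\beta>0$, and if $v_\Gamma(\beta)<v_\Gamma(\gamma)$ use the Archimedean clause of (H3), which makes $\theta_{v_\Gamma(\gamma)}-\theta_{v_\Gamma(\beta)}$ positive with absolute value infinitely smaller than $\beta$. Hence $v\big(d(a)/a\big)=\theta_{v_\Gamma(v(a))}$. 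Now (HD3) is immediate: $|v(a)|>|v(b)|>0$ gives $v_\Gamma(v(a))\le v_\Gamma(v(b))$ in $\Phi$, hence $\theta_{v_\Gamma(v(a))}\le\theta_{v_\Gamma(v(b))}$ by (H3), with equality exactly when $v_\Gamma(v(a))=v_\Gamma(v(b))$, i.e.\ $v(a)\sim_+v(b)$. For (HD2) it suffices to show $\Psi\colon\gamma\mapsto\gamma+\theta_{v_\Gamma(\gamma)}$ is strictly increasing on $\Gamma\setminus\{0\}$; for $\gamma<\delta$, positivity of $\Psi(\delta)-\Psi(\gamma)=(\delta-\gamma)+\big(\theta_{v_\Gamma(\delta)}-\theta_{v_\Gamma(\gamma)}\big)$ is clear when $v_\Gamma(\gamma)\le v_\Gamma(\delta)$, and when $v_\Gamma(\delta)<v_\Gamma(\gamma)$ it follows from the Archimedean clause applied to $v_\Gamma(\delta)=v_\Gamma(\delta-\gamma)$. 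Finally, the formula above shows $\ker d$ contains no element of valuation $\ne 0$ and no non-constant unit, so $\ker d=k$; decomposing $a\in\mathcal{O}_v$ into its $t^0$-coefficient plus a member of $\mathfrak{m}_v$ gives $\mathcal{O}_v=k+\mathfrak{m}_v=\ker d+\mathfrak{m}_v$, i.e.\ (HD1).

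\textbf{From (HD2) and (HD3) to (H3).} Apply (HD3) to $a=t_{\phi_1}$, $b=t_{\phi_2}$ with $\phi_1<\phi_2$: here $|v(t_{\phi_1})|=|\mathds{1}_{\phi_1}|>|\mathds{1}_{\phi_2}|=|v(t_{\phi_2})|>0$ and $\mathds{1}_{\phi_1}\not\sim_+\mathds{1}_{\phi_2}$, so (HD3) forces the strict inequality $\theta_{\phi_1}<\theta_{\phi_2}$, the first half of (H3). With $\theta$ now monotone, the leading-term computation above gives $v\big(d(t^\gamma)\big)=\gamma+\theta_{v_\Gamma(\gamma)}$, and (HD2) applied to monomials says that $\Psi\colon\gamma\mapsto\gamma+\theta_{v_\Gamma(\gamma)}$ is strictly increasing on $\Gamma\setminus\{0\}$. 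To get the Archimedean clause, fix $\phi_1<\phi_2$; using $A_\phi=\R$, pick for each small $r\in\R_{>0}$ an element $\gamma_r\in\Gamma$ with $v_\Gamma(\gamma_r)=\phi_1$ and $\phi_1$-component $r$. Then $\gamma_r>\mathds{1}_{\phi_2}$ (since $\phi_1<\phi_2$), so strict monotonicity of $\Psi$ gives $\gamma_r+\theta_{\phi_1}>\mathds{1}_{\phi_2}+\theta_{\phi_2}$, i.e.\ $\theta_{\phi_1}-\theta_{\phi_2}>\mathds{1}_{\phi_2}-\gamma_r$ for every small $r>0$; since $\mathds{1}_{\phi_2}-\gamma_r$ has $v_\Gamma$-class $\phi_1$ with $\phi_1$-component $-r$, while $\theta_{\phi_1}-\theta_{\phi_2}<0$, letting $r\downarrow 0$ forces $v_\Gamma(\theta_{\phi_1}-\theta_{\phi_2})>\phi_1$. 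This completes (H3).

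\textbf{Main obstacle.} The delicate point in both directions is the Archimedean clause $v_\Gamma(\theta_{\phi_1}-\theta_{\phi_2})>\phi_1$. Deriving it from (HD2) requires testing monotonicity of $\Psi$ against elements of $\Gamma$ of class $\phi_1$ with leading coefficient tending to $0$ -- which is exactly where the hypothesis $A_\phi=\R$ enters -- while using it requires the tail estimate $v(d(u))>\theta_{v_\Gamma(v(a))}$ for the infinitesimal part $u$ of an arbitrary series. By comparison, the monotonicity $\theta_{\phi_1}<\theta_{\phi_2}$ and the translation formula $v(d(a))=v(a)+\theta_{v_\Gamma(v(a))}$ on monomials are routine, and they are precisely what reduce the general case to monomials by making the unit factor $1+u$ irrelevant to the leading valuation.
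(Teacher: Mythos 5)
Your overall strategy is sound, and your forward direction ((H3) $\Rightarrow$ (HD1)--(HD3)) is correct and fully general: the leading-term computation $v\big(d(t^\gamma)/t^\gamma\big)=\theta_{v_\Gamma(\gamma)}$, the tail estimate $\beta+\theta_{v_\Gamma(\beta)}>\theta_{v_\Gamma(\gamma)}$ (using the Archimedean clause exactly when $v_\Gamma(\beta)<v_\Gamma(\gamma)$), and the resulting map $\Psi(\gamma)=\gamma+\theta_{v_\Gamma(\gamma)}$ are precisely the mechanism the survey itself uses in the displayed computation preceding its H-field criterion. Note that the paper does not prove this theorem in the text at all: it cites \cite[Theorem 4.3 and Corollary 4.4]{matu-kuhlm:hardy-deriv-gener-series}, so there is no in-paper argument to compare with line by line.

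The genuine gap is in the converse, at the Archimedean clause, and it is the step you flag yourself. The theorem is stated for $\Gamma$ an arbitrary subgroup of the Hahn group over the $\overline{A}_\phi$; the assumption $A_\phi=\R$ is only imposed by the paper \emph{after} this theorem (``for the rest of this section''), and the cited source proves the statement without it. Your test family $\gamma_r$ with $v_\Gamma(\gamma_r)=\phi_1$ and $\phi_1$-component $r\downarrow 0$ need not exist: if $A_{\phi_1}$ is discrete (e.g.\ isomorphic to $\Z$), there are no positive elements of class $\phi_1$ with arbitrarily small leading coefficient, and your inequality $\theta_{\phi_1}-\theta_{\phi_2}>\mathds{1}_{\phi_2}-\gamma_r$ then no longer excludes $v_\Gamma(\theta_{\phi_1}-\theta_{\phi_2})=\phi_1$. (A smaller related point: testing (HD2)/(HD3) against $t_{\phi_1},t_{\phi_2}$ presumes $\mathds{1}_{\phi_i}\in\Gamma$, which a subgroup of the Hahn group need not contain; the paper is admittedly loose about this itself.) The density assumption can be avoided as follows. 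Once $\theta_{\phi_1}<\theta_{\phi_2}$ is known, so that $v\big(d(t^\gamma)\big)=\gamma+\theta_{v_\Gamma(\gamma)}$ holds exactly on monomials, compare $b=t^{\gamma}$ with $a=t^{\gamma}+t^{\gamma+\delta}$, where $\gamma\in\Gamma$ has class $\phi_2$ and $\delta\in\Gamma_{>0}$ has class $\phi_1$: then $v(a)=v(b)\neq 0$, so (HD2) forces $v(d(a))=v(d(b))$, which yields $\delta+\theta_{\phi_1}\geq\theta_{\phi_2}$, i.e.\ $\delta\geq\mu:=\theta_{\phi_2}-\theta_{\phi_1}$ for \emph{every} such $\delta$. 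If $v_\Gamma(\mu)<\phi_1$ this already contradicts the existence of some positive class-$\phi_1$ element; if $v_\Gamma(\mu)=\phi_1$, take $\delta:=\mu-\beta$ with $\beta\in\Gamma_{>0}$ of class $\phi_2$, which is a positive class-$\phi_1$ element smaller than $\mu$, again a contradiction. Hence $v_\Gamma(\mu)>\phi_1$ with no restriction on the ribs; with this replacement your proof goes through in the generality in which the theorem is stated.
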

In other words, for the given hypothesis, the corresponding map $\phi\mapsto \theta_\phi$ is order-preserving and the maps $\theta_{\phi_1}\mapsto\theta_{\phi_2}$ and $\phi_1\mapsto v_\Gamma\left(\theta_{\phi_1}-\theta_{\phi_2}\right)$ are \emph{right-shifts}. As an illustration, consider the example in (\ref{ex:hardy-series}).\\

Concerning ordered differential fields, in \cite{vdd:asch:H-fields-liouv-ext} is developed the notion of \emph{H-field} which generalizes the one of Hardy field. An \textbf{H-field} is an ordered field (denote by $v$ the natural convex valuation on it) endowed with a derivation $d:K\rightarrow K$ such that:
\begin{description}
    \item[(HF1)] $\mathcal{O}_v=\ker d + \mathfrak{m}_v$;
\item[(HF2)] for any $f\in K_{>0}$, $v(f)<0 \Rightarrow d(f)>0$.
\end{description}
Note that (HD1) is (HF1). Properties (HD2) and (HD3) also hold for H-fields \cite[Lemma 1.1 and 2.2]{vdd:asch:H-fields-liouv-ext}. \\
In the case where $k$ is ordered, one can order $k((\Gamma))$ lexicographically, so that the valuation ring $k[[\Gamma_{\geq 0}]]$ is the convex hull of $k$. Suppose that $k((\Gamma))$ is endowed with a series derivation $d$ of Hardy type.  In this context:
\begin{description}
    \item[]\emph{ $k((\Gamma))$ is an H-field if and only if for any $\phi\in\Phi$, $\displaystyle\frac{d(t_\phi)}{t_\phi} <0$, that is, the leading coefficient of $\displaystyle\frac{d(t_\phi)}{t_\phi}$ is negative. }
\end{description}
\noindent Indeed, for any series $a>0$ with $v(a)<0$, denote $a=a_\alpha t^\alpha+\cdots$ where $v(a)=\alpha$ and $a_\alpha>0$. Denote $v_\Gamma(\alpha)=\phi$ and the coefficient of $\mathds{1}_\phi$ in $\alpha$ by $\alpha_0$. Note that $\alpha_0<0$ since $v(a)=\alpha<0$. By the strong linearity, the strong Leibniz rule and (H3), we have: 
\begin{equation}\label{equ:deriv-log}
d(a)=a_\alpha d(t^\alpha)+d(\cdots)=a_\alpha t^\alpha \left(\alpha_0 \displaystyle\frac{d(t_\phi)}{t_\phi}+\cdots\right)+d(\cdots) 
\end{equation}
So $d(a)$ has same sign as $-\displaystyle\frac{d(t_\phi)}{t_\phi}$. Thus, $d(a)>0$ as required in (HF1) iff $\displaystyle\frac{d(t_\phi)}{t_\phi} <0$.

\begin{example}\label{ex:generaux} We use theorems \ref{theo:series-deriv} and \ref{theo:hardy-deriv} to build general examples of series derivations of Hardy type on $k((\Gamma))$. If we restrict our attention to the value $\theta_\phi=v(d(t_\phi)/t_\phi)$, then (H3) tells us that the map $\phi\mapsto \theta_\phi$ has to be a \emph{section} of $\Phi$ in $\Gamma$.  It says also that for any $\phi_1<\phi_2$, the principal parts of $\theta_{\phi_1}$ and $\theta_{\phi_2}$ up to the component $\mathds{1}_{\phi_1}$ have to be identical. This can be achieved easily in the following cases. We leave the verifications to the reader.
\begin{itemize}
\item If $\Phi$ admits a \emph{right-shift} $\sigma:\Phi\rightarrow\Phi$, then for any $\phi$  pick $\theta_\phi\in\Gamma_{<0}$ such that $v_\Gamma(\theta_\phi)=\sigma(\phi)$ (see e.g. Theorem \ref{theo:series-deriv1}) and set for example $\displaystyle\frac{d(t_\phi)}{t_\phi}=c_{\phi}t^{\theta_\phi}$ for arbitrary $c_{\phi}\in k$. This includes for example the cases where $\Phi$ carries the structure of an ordered group, or where $\Phi$ is a limit ordinal. 
\item If $\Phi$ has a biggest element $\phi_0$ and carries a right-shift $\sigma:\Phi\setminus\{\phi_0\}\rightarrow\Phi$ then we can set the $\theta_\phi$'s and the logarithmic derivatives as before, just completing the definition for $\phi_0$ by setting  $\theta_{\phi_0}:=0$. This includes for example the cases where $\Phi$ is a successor ordinal or where $\Phi$ is isomorphic to an interval of $\R$ with greatest element. 

\end{itemize}
\end{example}

As a prolongation of \cite[Section 5.1]{matu-kuhlm:hardy-deriv-gener-series}, in \cite[Example 6, case 2]{matu-kuhlm-shkop:schanuel-note} we treat completely the case of subsets of $\R$.  Note that this includes any \emph{countable} well-ordered, reverse well-ordered or more generally scattered set (they all embed into $\Q$). It includes also the case of some tricky \emph{dense} subsets of $\R$ as the Dushnik-Miller example (a dense subset of $\R$ that admits no non trivial self-embedding). We invite the reader to read through \cite{rosen:lin-ord} for the various references and classical results. For simplicity we suppose for the rest of this section that $A_\phi=\R$ for any $\phi\in\Phi$.
\begin{proposition}\label{propo:subset-R}
Let $\Phi$ be isomorphic to a subset $S$ of $\R$. 
Then $k((\Gamma))$ carries a series derivation of Hardy type such that for any $\phi\in\Phi$, $\displaystyle\frac{d(t_\phi)}{t_\phi}$ is a monomial.
\end{proposition}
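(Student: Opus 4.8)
The plan is to reduce the statement, via Theorems \ref{theo:series-deriv} and \ref{theo:hardy-deriv}, to a single combinatorial problem about the ordered set $\Phi$, and then to solve that problem using the embedding $\Phi\hookrightarrow\R$ together with the hypothesis that every rib is $\R$. Since $\frac{d(t_\phi)}{t_\phi}$ is to be a monomial, I would look for it in the form $\frac{d(t_\phi)}{t_\phi}=c_\phi\,t^{\theta_\phi}$ with $c_\phi\in k\setminus\{0\}$ and $\theta_\phi\in\Gamma$, so that $v\left(\frac{d(t_\phi)}{t_\phi}\right)=\theta_\phi$ and $\mathrm{Supp}\,\frac{d(t_\phi)}{t_\phi}=\{\theta_\phi\}$. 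I would moreover require $\theta_\phi\in D_\phi$, i.e. $\mathrm{supp}\,\theta_\phi\subseteq\Phi\cap(\phi,+\infty)$, for every $\phi$. With this constraint, for $\phi_1<\phi_2$ one has $\theta_{\phi_2}\in D_{\phi_2}\subseteq D_{\phi_1}$, whence $\theta_{\phi_1}-\theta_{\phi_2}\in D_{\phi_1}$ and $v_\Gamma(\theta_{\phi_1}-\theta_{\phi_2})>\phi_1$; so the second clause of \textbf{(H3)} and condition \textbf{(C2)} of Theorem \ref{theo:series-deriv} hold automatically, and \textbf{(C1)} follows at once from the first clause of \textbf{(H3)}. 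Hence everything comes down to producing a family $(\theta_\phi)_{\phi\in\Phi}$ with $\theta_\phi\in D_\phi$ which is \emph{strictly increasing} as a map $\Phi\to(\Gamma,\leq)$ (each $\theta_\phi$ nonzero, except necessarily $\theta_\phi=0$ when $\phi$ is the largest element of $\Phi$, if there is one). Given such a family, Theorems \ref{theo:series-deriv} and \ref{theo:hardy-deriv} yield a series derivation of Hardy type with monomial logarithmic derivatives, and the discussion of the ordered case above makes $k((\Gamma))$ an H-field once $k$ is ordered and each $c_\phi<0$.

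To build the family I would identify $\Phi$ with $S\subseteq\R$. At points of $S$ admitting an immediate successor $\phi^{+}$ one proceeds essentially as in Example \ref{ex:generaux}, taking $\theta_\phi$ a negative multiple of $\mathds{1}_{\phi^{+}}$, the coefficients arranged to decrease along any descending run of such points. The real work is at a point $\phi$ that is a limit of $S$ from the right: the elements of $S$ immediately above $\phi$ form a strictly decreasing sequence $\psi_1>\psi_2>\cdots$ with $\psi_n\to\phi$, and the corresponding values $\theta_{\psi_n}$ must be strictly decreasing in $\Gamma$ with a lower bound still representable by some $\theta_\phi\in D_\phi$. This is where $S\subseteq\R$ and $A_\chi=\R$ enter: pick a position $\chi\in S$ above the whole block $\{\psi_n\}$, set $\theta_{\psi_n}:=r_n\mathds{1}_\chi$ for a strictly decreasing sequence of reals $r_n$ with finite limit $r$, and put $\theta_\phi:=r\,\mathds{1}_\chi$ (compare the monomials computed in Example \ref{ex:hardy-series}). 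The whole family is then assembled by transfinite recursion along a well-ordering of $S$, at each step wedging the new value strictly between those already assigned to the elements of $S$ lying immediately below and immediately above it.

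The main obstacle is keeping this recursion globally coherent: $\theta_\phi$ must lie below $\theta_\psi$ for \emph{all} $\psi>\phi$ and above $\theta_\psi$ for all $\psi<\phi$, while $\mathrm{supp}\,\theta_\phi$ stays inside $S\cap(\phi,+\infty)$. The construction can be made to work precisely because a subset of $\R$ contains no uncountable reversed well-order — a copy of $\omega_1^{*}$ sitting cofinally below a limit point would already make the interpolation impossible, as a direct check shows — and because the ribs being $\R$ provide exactly the room needed to interpolate at limits from the right (this also covers the scattered cases, which embed into $\Q$, and the tricky dense cases such as the Dushnik--Miller set). The detailed bookkeeping for this simultaneous squeezing is what is carried out in \cite[Example 6, case 2]{matu-kuhlm-shkop:schanuel-note}; once the family is in hand, the verification of \textbf{(C1)}, \textbf{(C2)} and \textbf{(H3)} is immediate from the reduction, and Theorem \ref{theo:hardy-deriv} finishes the proof.
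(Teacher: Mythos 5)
Your reduction is sound: asking for a family $(\theta_\phi)_{\phi\in\Phi}$ with $\theta_\phi\in D_\phi$ (except possibly at a greatest element of $\Phi$) that is strictly increasing as a map $\Phi\to\Gamma$ does give \textbf{(H3)}, hence \textbf{(C1)}, \textbf{(C2)}, and a Hardy type series derivation with monomial logarithmic derivatives via Theorems \ref{theo:series-deriv} and \ref{theo:hardy-deriv}. The gap is that you never actually produce such a family. Your sketch handles two local configurations (points with an immediate successor, and points approached from the right by a decreasing sequence of such points), but for a general $S\subseteq\R$ --- e.g.\ a dense subset such as $\Q$ or the Dushnik--Miller set, which the statement is explicitly meant to cover --- every point is a two-sided limit, there is no ``block'' of successor points above $\phi$, and the choices of the anchor $\chi$ and of the reals $r_n$ would have to be coordinated simultaneously over uncountably many overlapping such configurations. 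That global coherence (wedging each new value strictly between all values already assigned below and above, while keeping $\mathrm{supp}\,\theta_\phi$ inside $\Phi\cap(\phi,+\infty)$) is precisely the content of the proposition; acknowledging it as ``the main obstacle'' and deferring the ``detailed bookkeeping'' to \cite{matu-kuhlm-shkop:schanuel-note} leaves the proof incomplete, and the side remark that a copy of $\omega_1^{*}$ would make interpolation impossible ``as a direct check shows'' is neither checked nor needed, and cannot substitute for a positive construction.

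The paper closes this gap with a device your proposal misses, which makes recursion and interpolation unnecessary: transport the order isomorphism $s:\Phi\to S\subseteq\R$ itself into the coefficient at a fixed anchor, using the hypothesis $A_\phi=\R$. If $\Phi$ has a greatest element $\phi_0$, set $\theta_\phi:=s(\phi)\mathds{1}_{\phi_0}$ and $\frac{d(t_\phi)}{t_\phi}:=c_\phi t_{\phi_0}^{s(\phi)}$; then $\phi\mapsto\theta_\phi$ is strictly increasing because $s$ is, and $v_\Gamma(\theta_{\phi_1}-\theta_{\phi_2})=\phi_0>\phi_1$ for all $\phi_1<\phi_2$, so \textbf{(H3)} is immediate. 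If $\Phi$ has no greatest element, its cofinality is countable (it embeds in $\R$), so one chooses a cofinal sequence $(\phi_n)$, partitions $\Phi$ into the intervals $(\phi_{n-1},\phi_n]$, and anchors the $n$-th block at $\phi_{n+1}$ via an isomorphism $s_n$ onto a subset of $A_{\phi_{n+1}}=\R$ (which one may take inside $\R_{<0}$ to keep monotonicity across blocks), setting $\theta_\phi=s_n(\phi)\mathds{1}_{\phi_{n+1}}+\cdots$. This is a two-case explicit formula rather than a transfinite recursion, and the verification of \textbf{(H3)} is then a one-line computation.
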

\begin{proof} 
Suppose that $\Phi$ has a greatest element $\phi_0$ and consider an isomorphism $s:\Phi\rightarrow S\subset A_{\phi_0}=\R$. For any $\phi\in \Phi$ pick some element $\theta_\phi=s(\phi) \mathds{1}_{\phi_0}$ of $\Gamma$ and set for example $\displaystyle\frac{d(t_\phi)}{t_\phi}=c_{\phi}t^{\theta_\phi}= c_{\phi}t_{\phi_0}^{s(\phi)}$ where $c_{\phi}\in k$. Note that $v_\Gamma(\theta_{\phi_0})=\phi_0$.\\
Suppose that $\Phi$ has no greatest element. Then choose a sequence $(\phi_n)_{n\in\N_{>0}}$ of elements of $\Phi$ cofinal in $\Phi$, and set $\phi_0:=\inf \Phi$. The intervals $(\phi_n,\phi_{n+1}]\subset \Phi$ form a partition of $\Phi$ (i.e. they cover $\Phi$ without overlapping). Fix a corresponding family of isomorphisms $s_n:(\phi_{n-1},\phi_{n}]\rightarrow S_n\subset A_{\phi_{n+1}}=\R$.  For any $\phi\in (\phi_{n-1},\phi_{n}]$ pick some element $\theta_\phi=s_n(\phi) \mathds{1}_{\phi_{n+1}}+\cdots$ of $\Gamma$ and set for example $\displaystyle\frac{d(t_\phi)}{t_\phi}=c_{\phi}t^{\theta_\phi} = c_{\phi} t_{\phi_{n+1}}^{s_{n}(\phi)}\cdots $ where $c_{\phi}\in k$.
\end{proof}

\begin{remark}
In the second part of the preceding proof, we could have chosen a family of isomorphisms $s_n:(\phi_{n-1},\phi_{n}]\rightarrow S_n\subset A_{\phi_{n}}=\R$. But in this case, to comply the second part of (H3), we must adjoin a principal part to the $\theta_\phi$'s. Pick $\alpha_\phi=s_n(\phi) \mathds{1}_{\phi_{n}}+\cdots$ of $\Gamma$, then set $\theta_\phi:=s_1(\phi_1)\mathds{1}_{\phi_1}+\cdots+ s_{n-1}(\phi_{n-1})\mathds{1}_{\phi_{n-1}}+\alpha_\phi$  and for example $\displaystyle\frac{d(t_\phi)}{t_\phi}=c_{\phi}t^{\theta_\phi} = c_{\phi}t_{\phi_1}^{s_1(\phi_1)} \cdots t_{\phi_{n-1}}^{s_{n-1}(\phi_{n-1})} t_{\phi_{n}}^{s_{n}(\phi_{n})}\cdots $ where $c_{\phi}\in k$. As an illustration, in Example (\ref{ex:hardy-series}), one has $\displaystyle\frac{d(t_2)}{t_2}=\displaystyle\frac{1}{x}=-t_2$ and $\displaystyle\frac{d(t_3)}{t_3}=\displaystyle\frac{-1}{x\log(x)}= -t_2t_3$.
\end{remark}

We can generalize the construction in the preceding proof.

\begin{corollary}
Let $\Phi$ be isomorphic to the concatenation of a family of subsets of $\R$ over any ordered set which admits a right-shift. Suppose that $A_\phi\simeq \R$ for any $\phi\in\Phi$. Then  $k((\Gamma))$ carries a series derivation of Hardy type.
\end{corollary}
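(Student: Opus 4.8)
The plan is to build the derivation monomial by monomial: for each $\phi\in\Phi$ one exhibits an element $\theta_\phi\in\Gamma_{<0}$, and then puts $d(t_\phi)/t_\phi:=c_\phi\,t^{\theta_\phi}$ for an arbitrary nonzero $c_\phi\in k$ (say $c_\phi=-1$). Since each such logarithmic derivative is a single monomial, of value $\theta_\phi$ and support $\{\theta_\phi\}$, condition (H3) of Theorem \ref{theo:hardy-deriv} immediately implies conditions (C1) and (C2) of Theorem \ref{theo:series-deriv}: along a strictly increasing sequence $(\phi_n)$ the values $\theta_{\phi_n}$ are then strictly increasing, hence not decreasing, and along a strictly decreasing sequence $(\phi_n)$ one has $v_\Gamma(\theta_{\phi_{n+1}}-\theta_{\phi_n})>\phi_{n+1}$ for every $n$. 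Hence, as soon as the family $(\theta_\phi)_{\phi\in\Phi}$ satisfies (H3), the map $d$ extends to a series derivation by Theorem \ref{theo:series-deriv}, and it is of Hardy type by Theorem \ref{theo:hardy-deriv}; moreover, if $k$ is ordered and every $c_\phi<0$, then $d(t_\phi)/t_\phi<0$ for all $\phi$, so $k((\Gamma))$ with its lexicographic order is even an H-field. Everything therefore reduces to producing a family $(\theta_\phi)$ verifying (H3).

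Write $\Phi\simeq\sum_{i\in I}S_i$, the concatenation over an ordered set $I$ carrying a right-shift $\sigma$, each $S_i$ a subset of $\R$; denote a point of $\Phi$ by $\phi=(i,s)$ with $i\in I$, $s\in S_i$, and let $B_i:=\{(i,s)\ :\ s\in S_i\}$ be the $i$-th block, so that $\sigma(i)>i$ forces $B_{\sigma(i)}$ to lie entirely above $B_i$ in $\Phi$. Assuming for the moment that $S_{\sigma(i)}\neq\emptyset$ for every $i$, fix once and for all a point $\bar s_i\in S_{\sigma(i)}$, hence a coordinate $\psi_i:=(\sigma(i),\bar s_i)\in\Phi$, and the strictly increasing map $g:\R\to\R_{<0}$, $g(s):=-e^{-s}$. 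Because the rib $A_{\psi_i}$ equals $\R$, every negative real is the leading coefficient of some element of $\Gamma$ of value $\psi_i$; so choose $\theta_\phi\in\Gamma$ with $v_\Gamma(\theta_\phi)=\psi_i$ and leading coefficient $g(s)$ (in particular $\theta_\phi\in\Gamma_{<0}$). The key point is that a subset of $\R$ always embeds, order-preservingly, into a \emph{single} copy of $\R$, so one monomial of block $B_{\sigma(i)}$ suffices to host all of $B_i$.

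To verify (H3) for $\phi_1=(i_1,s_1)<\phi_2=(i_2,s_2)$: if $i_1=i_2=:i$ then $s_1<s_2$, both $\theta_{\phi_j}$ have value $\psi_i\in B_{\sigma(i)}$ which lies above $B_i\ni\phi_1$, so $v_\Gamma(\theta_{\phi_1}-\theta_{\phi_2})=\psi_i>\phi_1$, while $g(s_1)<g(s_2)$ gives $\theta_{\phi_1}<\theta_{\phi_2}$; if $i_1<i_2$ then $\sigma(i_1)<\sigma(i_2)$ by strictness of the right-shift, so $\psi_{i_1}<\psi_{i_2}$ and $\theta_{\phi_2}$ (supported at coordinates $\geq\psi_{i_2}$) vanishes at $\psi_{i_1}$, whence at $\psi_{i_1}$ the coefficient of $\theta_{\phi_1}$ is the negative number $g(s_1)$ and that of $\theta_{\phi_2}$ is $0$, giving $\theta_{\phi_1}<\theta_{\phi_2}$ and $v_\Gamma(\theta_{\phi_1}-\theta_{\phi_2})=\psi_{i_1}>\phi_1$ since $\sigma(i_1)>i_1$. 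This establishes (H3), hence the corollary.

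The one delicate point --- and the main obstacle --- is the standing assumption that each $B_{\sigma(i)}$ is nonempty, i.e. that there is a ``host block'' strictly above each $B_i$: if some $S_j$ are empty, one must either normalize the decomposition (discarding empty blocks and checking that the resulting ordered set still carries a right-shift) or replace $\sigma(i)$ by the least iterate $\sigma^{n}(i)$ with $S_{\sigma^{n}(i)}\neq\emptyset$, these iterates remaining strictly increasing in $i$ and strictly above $i$, so the computation above goes through verbatim. If a block $B_i$ happens to be cofinal in $\Phi$ (for instance if $\Phi$ has a greatest block) there is no host block above it, and one then defines $\theta_\phi$ on that block directly, reducing to the ``greatest element'' situations already treated in Proposition \ref{propo:subset-R} and Example \ref{ex:generaux} (e.g. $\theta_\phi:=0$ at the top, the rest of the block funneled into the coordinate indexed by its supremum). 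This is in fact cleaner than the case of the remark following Proposition \ref{propo:subset-R}, where the host block had to be approached from below along a cofinal sequence and a common principal part adjoined: here the right-shift on $I$ hands us a host block lying strictly above $B_i$, which is exactly what keeps the bookkeeping transparent.
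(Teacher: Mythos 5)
Your construction is essentially the paper's own proof: the paper simply reruns the construction of Proposition \ref{propo:subset-R} with $\N$ replaced by the index set $E$ and its right-shift, taking a chosen point of the block $S_{f(e)}$ as host coordinate for the block $S_e$ --- exactly your $\psi_i$, with an order-embedding of $S_e$ into the rib $\R$ as leading coefficient, monomial logarithmic derivatives, and (H3) (which, as you note, yields (C1) and (C2) in the monomial case). Your closing worries about empty or cofinal blocks lie outside the paper's reading of the statement (its proof picks $\phi_e\in S_e$ for every $e$, so every block automatically has a nonempty host block strictly above it), so no extra argument is needed there.
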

Indeed, denote $\Phi=\bigsqcup_{e\in E}S_e$ where $E$ is  the given set with a right-shift $f:E\rightarrow E$. In the proof of (\ref{propo:subset-R}), replace $\N$ by $E$, pick a family of elements $\phi_e\in S_e$ and apply the construction where $S_e$ stands for $S_n$, $\phi_{f(\phi_e)}$ for $\phi_{n+1}$, etc. Note that such set $\Phi$ may not carry itself a right shift. E.g. take $E=\Z$ and, for any $n\neq 0$, $S_n=\{0\}$ and $S_0=\Z_{\leq 0}$: the resulting $\Phi$ is order isomorphic to $\Z_{\leq 0}\bigsqcup\Z$ which does not admit any right shift.\\
%

Some of these abstract examples may be illustrated by or enhanced using germs in some Hardy fields in the spirit of (\ref{ex:hardy-series}). For instance, take the following Hardy fields:
\begin{itemize}
\item $\R(\exp(x)^\R,\exp_2(x)^\R,\ldots,\exp_n(x)^\R,\ldots)$
\item $\R(x^\R,\exp(x^\alpha)^\R;\alpha\in\R_{>0})$
\item $\R(x^\R,\log(x)^\R,\ldots,\log_n(x)^\R,\ldots)$
\item $\R(\ldots,\exp_n(x)^\R,\ldots, \exp_2(x)^\R, \exp(x)^\R,x^\R,\log(x)^\R,\ldots,\log_n(x)^\R,\ldots)$
\end{itemize}
We let the reader verify what kind of Hardy type series derivation may model these cases.

The preceding examples give partial solution to the following natural problem:\begin{description}
\item \emph{ Describe the generalized series fields which can carry a Hardy type series derivation.}
\end{description}
 We believe that a complete answer can be derived from (\ref{theo:hardy-deriv}). This would also help to characterize which groups may belong to an \emph{asymptotic couple} in the sense of \cite{rosenlicht:val_gr_diff_val2,vdd:asch:liouv-cl-H-fields}.

\section{On asymptotic integration, integration and logarithms.}\label{sect:prelog}

\begin{definition}\label{defi:integ}
Let $(K,v,d)$ be a valued differential field, and $a\in K$.
The element $a$ is said to admit an \textbf{asymptotic integral} $b$ if there exists $b\in K\setminus\{0\}$ such that $v(d(b)-a)> v(a)$. The element $a$ is said to admit an \textbf{integral} $b$ if there exists $b\in K\setminus\{0\}$ such that $d(b)=a$.
\end{definition}
For a valued differential field, the existence of a valuation permits to deal with approximate solutions of equations, for instance the basic differential equation corresponding to integration: \begin{equation}\label{equ:integration}
 d(y)=a
\end{equation}
In the context of Hardy fields, the problem of computing asymptotic integrals has been solved by M. Rosenlicht \cite[Proposition 2 and Theorem 1]{rosenlicht:rank}. In \cite{matu-kuhlm:hardy-deriv-gener-series} we observed that his proof applies to the more general context of valued fields with Hardy type derivations:
\begin{theorem}[Rosenlicht]\label{theo:integ-asymp} Let $(K,v,d)$ be a valued differential field with $d$ of Hardy type. Let $a\in K\backslash\{0\}$, then $a$ admits an asymptotic
integral if and only if \begin{center}
 $v(a)\neq  l.u.b.\left\{v\left(\displaystyle\frac{d(b)}{b}\right);\ b\in K\backslash\{0\},\ v(b)\neq 0\right\}$
\end{center}
Moreover, for any such $a$, there exists $u_0\in
K\backslash\{0\}$ with $v(u_0)\neq 0$ such that for any $u\in
K\backslash\{0\}$, $|v(u_0)|> |v(u)|>0$,
then $a.\displaystyle\frac{a u/d(u)}{d(a u/d(u))}$ is an asymptotic integral of $a$.
\end{theorem}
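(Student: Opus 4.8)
The plan is to follow Rosenlicht's original argument for Hardy fields and check that it goes through using only the axioms (HD1)--(HD3) of a Hardy type derivation. I would first unwind the statement: the quantity $\lambda:=\mathrm{l.u.b.}\{v(d(b)/b)\ ;\ b\in K\setminus\{0\},\ v(b)\neq 0\}$ plays the role of a ``critical value''; axiom (HD3) tells us that $v\mapsto v(d(b)/b)$ is essentially order-reversing on Archimedean classes, so this set is a chain in $\Gamma\cup\{\infty\}$ and the least upper bound makes sense in the Dedekind completion. The claim is that $a$ has an asymptotic integral exactly when $v(a)$ misses this critical value.

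For the direction ``$v(a)\neq\lambda\Rightarrow$ asymptotic integral exists'', I would split into the two cases $v(a)<\lambda$ and $v(a)>\lambda$ (the case $v(a)=0$ being handled separately, where one can integrate a unit using (HD1): write $a=c+\varepsilon$ with $c\in\ker d$, $\varepsilon\in\mathfrak m_v$, and... actually here one wants $b$ with $v(d(b))=v(a)$, achieved by a suitable monomial-type element). The heart is the explicit formula in the statement: given $a$, one produces $u_0\neq 0$ with $v(u_0)\neq 0$ such that for every $u$ with $0<|v(u)|<|v(u_0)|$, the element $b:=a\cdot\dfrac{au/d(u)}{d(au/d(u))}$ satisfies $v(d(b)-a)>v(a)$. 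To verify this one computes $d(b)/a$ and shows its value is $0$ with the correction term of positive value; this uses l'Hospital's rule (HD2) to control $v(d(au/d(u)))$ in terms of $v(au/d(u))$, and (HD3) to compare logarithmic derivatives. The choice of $u_0$ is where $v(a)\neq\lambda$ enters: one picks $u_0$ so that $v(au/d(u))$ has the ``right'' sign and magnitude relative to $v(a)$, which is possible precisely because $v(a)$ is not the limiting critical value — if $v(a)=\lambda$ there is no room to choose such a $u$.

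For the converse ``asymptotic integral exists $\Rightarrow v(a)\neq\lambda$'', I would argue by contraposition: suppose $v(a)=\lambda$ and some $b\neq 0$ has $v(d(b)-a)>v(a)$. Then $v(d(b))=v(a)=\lambda$, and in particular $v(b)\neq 0$ (since if $v(b)=0$, write $b=c+\varepsilon$ and note $d(b)=d(\varepsilon)$ with $v(\varepsilon)>0$, forcing $v(d(b))$ to be at least as large as values attained by logarithmic derivatives times something of positive value — one derives a contradiction with $v(d(b))=\lambda$ using (HD2)/(HD3)). So $v(b)\neq 0$, hence $v(d(b)/b)=v(d(b))-v(b)=\lambda - v(b)$. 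But $\lambda$ is the l.u.b. of all such $v(d(b')/b')$, and (HD3) forces $v(d(b)/b)$ to be strictly less than $\lambda$ whenever $|v(b)|$ is not maximal, while if it equals $\lambda$ one gets $v(b)=0$, a contradiction; chasing this through yields the needed contradiction.

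The main obstacle I expect is the computational verification of the explicit asymptotic integral formula — showing $v(d(b)-a)>v(a)$ for $b=a\cdot\dfrac{au/d(u)}{d(au/d(u))}$ — since it requires juggling three nested applications of the derivation and invoking (HD2) and (HD3) at just the right places, together with the ``room to choose $u$'' argument that packages the hypothesis $v(a)\neq\lambda$. A secondary subtlety is making the l.u.b. rigorous (it lives in the Dedekind completion of $\Gamma$, not $\Gamma$ itself) and handling the edge case $v(a)=0$ uniformly with the generic case. Since the excerpt only asks us to observe that Rosenlicht's proof transfers, I would present the argument as: recall the three axioms are exactly what Rosenlicht uses (his Principle (*) and the differential-valuation axioms), then cite \cite[Proposition 2 and Theorem 1]{rosenlicht:rank} with the remark that the proof there never uses the ambient Hardy field structure beyond (HD1)--(HD3).
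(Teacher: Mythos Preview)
Your proposal is correct and aligns with the paper's treatment: the paper does not give a detailed proof of this theorem at all, but simply states (in the sentence preceding it) that Rosenlicht's original argument in \cite[Proposition~2 and Theorem~1]{rosenlicht:rank} applies verbatim once one observes that the only properties of Hardy fields used there are precisely the axioms (HD1)--(HD3). Your concluding paragraph is exactly this observation, and the preceding sketch of how the verification would go is more detail than the paper itself provides.
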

Note that in our context of a generalized series field $k((\Gamma))$ endowed with a series derivation of Hardy type, by the computation (\ref{equ:deriv-log}) in Section \ref{sect:hardy-deriv}, one has  for any $b\in K\backslash\{0\}$ with $v(b)\neq 0$ that:
\begin{center}
$v\left(\displaystyle\frac{d(b)}{b}\right)= v\left(\displaystyle\frac{d(t^\beta)}{t^\beta}\right)= v\left(\displaystyle\frac{d(t_\phi)}{t_\phi}\right)=\theta_\phi$
\end{center}  
where $\beta=v(b)$ and $\phi=v_\Gamma(\beta)$. Therefore in the hypothesis of (\ref{theo:integ-asymp}) we can replace the set  $\left\{v\left(\displaystyle\frac{d(b)}{b}\right);\ b\in k((\Gamma))\setminus\{0\},\ v(b)\neq 0\right\}$ by the set $\left\{\theta_\phi;\ \phi\in\Phi\right\}$.

We can derive from (\ref{theo:integ-asymp}) in our context explicit formulas for the computation of a specific asymptotic integral: the unique one which is a monomial $c_\gamma t^\gamma$  \cite[Corollary 6.3]{matu-kuhlm:hardy-deriv-gener-series} or \cite[Section 4.1]{matu-kuhlm:hardy-deriv-EL-series}:
\begin{corollary}
Let $\alpha\in\Gamma$ with $\alpha\neq \tilde{\theta}$, $\alpha=\alpha_0\mathds{1}_{\phi_0}+\cdots$. There exists
a uniquely determined $\psi_\alpha\in\Phi$ which satisfies
$\alpha - \theta_{\psi_\alpha}= \gamma_0\mathds{1}_{\psi_\alpha}+\cdots$ for some $\gamma_0\neq 0$.\\
 Set $\displaystyle\frac{d(t_{\psi_\alpha})}{t_{\psi_\alpha}}=c_{\psi_\alpha}
t^{\theta_{\psi_\alpha}}+\cdots$. 
Consequently, any series $a=a_\alpha t^\alpha+\cdots$  admits as \textbf{monomial asymptotic integral}:
\begin{center}
$\mathrm{a.i.}(a)= \displaystyle\frac{a_\alpha}{\gamma_0 
c_{\psi_\alpha}}t^{\alpha - \theta_{\psi_\alpha}}$
\end{center}
\end{corollary}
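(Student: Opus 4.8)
The plan is to reduce the whole statement to one computation — the value and leading coefficient of $d(t^\gamma)$ for a monomial — together with a single order-theoretic lemma about the map $\phi\mapsto\theta_\phi$. \textbf{First step: the derivative of a monomial.} For $\gamma\in\Gamma\setminus\{0\}$ set $\psi:=v_\Gamma(\gamma)=\min(\mathrm{supp}\,\gamma)$ and let $\gamma_\psi\in\R$ be the leading coefficient of $\gamma$. By the strong Leibniz rule $d(t^\gamma)=t^\gamma\sum_{\phi}\gamma_\phi\,d(t_\phi)/t_\phi$, and since each nonzero term has value $v(\gamma_\phi\,d(t_\phi)/t_\phi)=\theta_\phi$ while $\phi\mapsto\theta_\phi$ is \emph{strictly} increasing by (H3), the minimum over $\mathrm{supp}\,\gamma$ is attained exactly once, at $\phi=\psi$. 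Hence $v(d(t^\gamma))=\gamma+\theta_\psi$ and the leading coefficient of $d(t^\gamma)$ is $\gamma_\psi\,c_\psi$, where $c_\psi$ denotes the leading coefficient of $d(t_\psi)/t_\psi$; in short $d(t^\gamma)=\gamma_\psi c_\psi\,t^{\gamma+\theta_\psi}+(\text{higher-order terms})$. Consequently a monomial $c\,t^\gamma$ is an asymptotic integral of $a=a_\alpha t^\alpha+\cdots$ precisely when $\gamma+\theta_{v_\Gamma(\gamma)}=\alpha$ and $c\cdot\gamma_{v_\Gamma(\gamma)}c_{v_\Gamma(\gamma)}=a_\alpha$. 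Writing $\psi:=v_\Gamma(\gamma)$, the first condition says exactly $\gamma=\alpha-\theta_\psi$ and $v_\Gamma(\alpha-\theta_\psi)=\psi$, i.e. $\psi$ is the element $\psi_\alpha$ of the statement; then $\gamma_0\neq 0$ automatically, and $c=a_\alpha/(\gamma_0 c_{\psi_\alpha})$ gives exactly the announced formula. Uniqueness of the monomial asymptotic integral drops out the same way, since any competitor forces the same $\psi$ and then the same coefficient.

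\textbf{Second step: existence and uniqueness of $\psi_\alpha$.} This is the core of the proof. The input from (H3) is a coherence property: for $\phi_1<\phi_2$ the elements $\theta_{\phi_1}$ and $\theta_{\phi_2}$ agree on all coordinates of index $\leq\phi_1$, since $v_\Gamma(\theta_{\phi_1}-\theta_{\phi_2})>\phi_1$. Put $U:=\{\phi\in\Phi:\ v_\Gamma(\alpha-\theta_\phi)\leq\phi\}$. Using coherence and the ultrametric inequality, one checks that $U$ is upward closed, that $\phi\mapsto v_\Gamma(\alpha-\theta_\phi)$ is \emph{constant} on $U$ — say equal to $\beta^*$ — and that $\beta^*\leq\phi$ for every $\phi\in U$. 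If $U\neq\emptyset$, the same kind of computation shows that $U$ has a least element and that $\beta^*=\min U$: were $\beta^*<\min U$, then $v_\Gamma(\alpha-\theta_{\min U})\geq\min\!\bigl(v_\Gamma(\alpha-\theta_{\beta^*}),\,v_\Gamma(\theta_{\min U}-\theta_{\beta^*})\bigr)>\beta^*$, contradicting $v_\Gamma(\alpha-\theta_{\min U})=\beta^*$. Then $\psi_\alpha:=\min U=\beta^*$ satisfies $v_\Gamma(\alpha-\theta_{\psi_\alpha})=\psi_\alpha$, and it is the only such element: on $\Phi\setminus U$ one has $v_\Gamma(\alpha-\theta_\phi)>\phi$, and for $\phi\in U$ with $\phi>\psi_\alpha$ one has $v_\Gamma(\alpha-\theta_\phi)=\beta^*=\psi_\alpha<\phi$. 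Finally, if $U=\emptyset$ then $v_\Gamma(\alpha-\theta_\phi)>\phi$ for all $\phi$, so $\alpha$ and $\theta_\phi$ agree on all coordinates of index $\leq\phi$ for every $\phi$; combined with strict monotonicity of $\theta$ this forces $\alpha>\theta_\phi$ for all $\phi$ and in fact $\alpha=\mathrm{l.u.b.}\{\theta_\phi:\phi\in\Phi\}=\tilde{\theta}$, which is excluded by hypothesis.

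\textbf{Conclusion.} Combining the two steps: given $\alpha\neq\tilde{\theta}$, the second step produces the unique $\psi_\alpha$; set $\gamma:=\alpha-\theta_{\psi_\alpha}$ (with leading coefficient $\gamma_0\neq 0$) and $b:=\dfrac{a_\alpha}{\gamma_0 c_{\psi_\alpha}}\,t^{\gamma}$. By the first step $d(b)=a_\alpha t^\alpha+(\text{terms of value}>\alpha)$, and since $a=a_\alpha t^\alpha+(\text{terms of value}>\alpha)$ as well, $v(d(b)-a)>\alpha=v(a)$, so $b$ is an asymptotic integral of $a$; and $b$ is exactly the monomial in the statement. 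The main obstacle is the order-theoretic argument of the second step — specifically, proving that $U$ has a least element when nonempty and that $U=\emptyset$ forces $\alpha=\tilde{\theta}$; everything else is a routine consequence of strong linearity, the strong Leibniz rule, and (H3).
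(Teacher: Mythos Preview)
Your argument is essentially correct and is a genuinely direct route: the paper presents this corollary as a consequence of Rosenlicht's asymptotic integration theorem (Theorem~\ref{theo:integ-asymp}), whereas you bypass that machinery entirely and instead prove the existence and uniqueness of $\psi_\alpha$ by an elementary order-theoretic argument on the map $\phi\mapsto\theta_\phi$ using only (H3) and the ultrametric inequality for $v_\Gamma$. The gain of your approach is self-containment --- you never need the general valuation-theoretic result on asymptotic integrals, and you get uniqueness of the \emph{monomial} asymptotic integral for free. The paper's route, by contrast, gets existence of \emph{some} asymptotic integral from Rosenlicht's theorem and then extracts the monomial one; this is less elementary but situates the formula inside the general theory of differential valuations.

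Two small points of presentation in your second step. First, when you argue ``were $\beta^*<\min U$ \ldots'', you are tacitly assuming $\min U$ exists; the cleaner order is to show directly that $\beta^*\in U$ (take any $\phi\in U$, note $v_\Gamma(\theta_\phi-\theta_{\beta^*})>\beta^*$ while $v_\Gamma(\alpha-\theta_\phi)=\beta^*$, hence $v_\Gamma(\alpha-\theta_{\beta^*})=\beta^*$), which then gives $\beta^*=\min U$ immediately. Second, the case $U=\emptyset$ deserves one more sentence: from $v_\Gamma(\alpha-\theta_\phi)>\phi$ for all $\phi$ you should spell out why $\alpha>\theta_\phi$ (compare $\alpha-\theta_\phi$ with $\theta_\psi-\theta_\phi$ where $\psi=v_\Gamma(\alpha-\theta_\phi)$; they share the same leading coordinate, which is positive since $\theta_\psi>\theta_\phi$) and why no smaller $\beta\in\Gamma$ can be an upper bound (if $\beta<\alpha$, set $\psi=v_\Gamma(\alpha-\beta)$ and compare $\psi$-coordinates of $\beta$ and $\theta_\psi$). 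With those two clarifications the proof is complete.
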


A natural question now is: \begin{description}
\item \emph{What is the relation between asymptotic integration and integration ?} \end{description} The answer is given in \cite[Theorem 55]{kuhl:Hensel-lemma} for spherically complete valued differential fields: \emph{asymptotic integration implies integration}. Applying this result to our context we obtain that:
\begin{corollary}\label{coro:integ}
Let $k((\Gamma))$ be endowed with a series derivation of
Hardy type  $d$. Set
$\tilde{\theta}=l.u.b.\left\{\theta_{\phi}\ ;\
\phi\in\Phi\right\}$. Then any series
$a\in k((\Gamma))$ with $v(a)>\tilde{\theta}$ admits an integral in
$k((\Gamma))$. Moreover $k((\Gamma))$ is closed under integration if
and only if $\tilde{\theta}\notin\Gamma$.
\end{corollary}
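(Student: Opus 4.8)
The plan is to combine the previously established Rosenlicht criterion for asymptotic integration (Theorem \ref{theo:integ-asymp}) with the result cited from \cite{kuhl:Hensel-lemma} that, in a spherically complete valued differential field, asymptotic integration implies integration. Since $k((\Gamma))$ is spherically complete (it is maximally valued), it suffices to show that every series $a$ with $v(a)>\tilde\theta$ admits an asymptotic integral, and then invoke the Hensel-type result to upgrade this to a genuine integral.

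First I would recall, as noted in the paragraph preceding the statement, that the set $\left\{v(d(b)/b)\ ;\ b\in k((\Gamma))\setminus\{0\},\ v(b)\neq 0\right\}$ appearing in Theorem \ref{theo:integ-asymp} equals $\{\theta_\phi\ ;\ \phi\in\Phi\}$, whose least upper bound is precisely $\tilde\theta$. Hence for any $a\in k((\Gamma))\setminus\{0\}$ with $v(a)>\tilde\theta$ we have in particular $v(a)\neq\tilde\theta = \mathrm{l.u.b.}\{\theta_\phi\}$, so Theorem \ref{theo:integ-asymp} applies and $a$ admits an asymptotic integral $b\in k((\Gamma))\setminus\{0\}$. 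Since $k((\Gamma))$ is spherically complete, \cite[Theorem 55]{kuhl:Hensel-lemma} then yields an actual integral of $a$ in $k((\Gamma))$; the case $a=0$ is trivial with integral any nonzero constant's... more precisely one notes $d$ kills constants so $0$ is integrated by any element of $\ker d$, but in fact the interesting content is for $a\neq 0$. This proves the first assertion.

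For the equivalence, I would argue both directions. If $\tilde\theta\notin\Gamma$, then $v(a)>\tilde\theta$ holds for \emph{every} nonzero $a\in k((\Gamma))$: indeed $v(a)\in\Gamma$ and $\tilde\theta$, not lying in $\Gamma$, is a cut in $\Gamma$, and since it is the l.u.b. of the $\theta_\phi$'s which are genuine elements, any $\gamma\in\Gamma$ satisfies either $\gamma\leq\theta_\phi<\tilde\theta$ for some $\phi$ or $\gamma>\theta_\phi$ for all $\phi$ hence $\gamma\geq\tilde\theta$, and $\gamma=\tilde\theta$ is excluded; one must check the remaining case $\gamma<\theta_\phi$ for some $\phi$ is still $<\tilde\theta$, so the only elements $\gamma$ not $>\tilde\theta$ are those $\leq$ some $\theta_\phi$ — wait, this needs care, so the honest statement is: $\gamma>\tilde\theta$ fails exactly when $\gamma\leq\theta_\phi$ for some $\phi$ is false \emph{and} $\gamma=\tilde\theta$... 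Let me instead say: if $\tilde\theta\notin\Gamma$ then no $\gamma\in\Gamma$ equals $\tilde\theta$, and the set $\{\gamma\in\Gamma : \gamma\le \tilde\theta\} = \{\gamma : \gamma \le \theta_\phi \text{ for some }\phi\}$ since $\tilde\theta$ is the supremum; hence $\{\gamma : \gamma>\tilde\theta\}$ is the complement, and every nonzero $a$ with $v(a)$ in this complement has an integral by the first part, while those with $v(a)\le\theta_\phi$ for some $\phi$... these are handled by the first part applied after the fact that we only claimed closure, so I would instead prove closure directly: decompose an arbitrary $a$ as its "tail above $\tilde\theta$" plus a series supported at or below, and integrate termwise using the monomial asymptotic integral plus spherical completeness. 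Conversely, if $\tilde\theta\in\Gamma$, I would exhibit a series with $v(a)=\tilde\theta$ having no integral, using that an integral $b$ of $a$ would give $v(d(b)/b)=\theta_{v_\Gamma(v(b))}<\tilde\theta$ forcing $v(d(b))<\tilde\theta=v(a)$, a contradiction, after checking $v(b)\neq 0$.

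The main obstacle I anticipate is the careful bookkeeping in the closure-under-integration direction: one cannot simply integrate term by term because the set of exponents $\leq\tilde\theta$ need not be empty, and for such monomials Rosenlicht's criterion $v(a)\neq\tilde\theta$ may fail (precisely when $v(a)=\tilde\theta$ with $\tilde\theta\in\Gamma$). So the equivalence hinges on isolating exactly the monomials $c t^\gamma$ with $\gamma=\tilde\theta$ as the sole obstruction, showing all other monomials — those with $\gamma<\tilde\theta$ as well as $\gamma>\tilde\theta$ — do admit (asymptotic hence genuine, by spherical completeness) integrals via the monomial asymptotic integral formula of the Corollary preceding, and then assembling a summable family of such integrals whose sum integrates $a$; verifying summability of the resulting family of monomial integrals is the technical heart, and is where one re-uses the right-shift structure encoded in (H3) exactly as in the proof of Theorem \ref{theo:series-deriv1}.
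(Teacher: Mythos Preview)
Your argument for the first assertion is correct and is exactly the paper's (implicit) route: Rosenlicht's criterion gives an asymptotic integral whenever $v(a)\neq\tilde\theta$, and spherical completeness of $k((\Gamma))$ together with \cite[Theorem~55]{kuhl:Hensel-lemma} upgrades this to a genuine integral.

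The trouble is in the equivalence. In the direction $\tilde\theta\notin\Gamma\Rightarrow$ closed under integration, you try to reduce to the first assertion by showing that $v(a)>\tilde\theta$ for \emph{every} nonzero $a$. This is false: nothing prevents $\Gamma$ from having elements strictly below $\tilde\theta$ (indeed every $\theta_\phi$ is such an element). You notice the problem yourself and then propose a decomposition-and-termwise-integration scheme, with the anticipated ``technical heart'' being summability of the family of monomial integrals. This detour is unnecessary. The point you are missing is that Rosenlicht's criterion in Theorem~\ref{theo:integ-asymp} is $v(a)\neq\tilde\theta$, not $v(a)>\tilde\theta$. If $\tilde\theta\notin\Gamma$, then every nonzero $a\in k((\Gamma))$ has $v(a)\in\Gamma$ and hence $v(a)\neq\tilde\theta$; so every nonzero $a$ already has an asymptotic integral, and the same spherical-completeness argument you used for the first part applies directly (the residuals $d(b)-a$ again have valuation in $\Gamma\cup\{\infty\}$, hence $\neq\tilde\theta$, so the iteration never stalls). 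No decomposition, no summability check.

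For the converse you also work harder than needed. If $\tilde\theta\in\Gamma$, take $a=t^{\tilde\theta}$. By the ``only if'' direction of Theorem~\ref{theo:integ-asymp}, $a$ has \emph{no} asymptotic integral. But an integral is trivially an asymptotic integral (if $d(b)=a$ then $v(d(b)-a)=\infty>v(a)$), so $a$ has no integral either. Your computation with $v(d(b)/b)=\theta_{v_\Gamma(v(b))}$ is essentially reproving this direction of Rosenlicht's theorem by hand.

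In short: the skeleton is right, but the equivalence follows in one line each way from Theorem~\ref{theo:integ-asymp} plus spherical completeness, without the first assertion as an intermediate step and without any termwise summability argument.
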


A particular case of the problem of integration that we want to solve is the one for the \emph{logarithm}: \begin{equation}\label{equ:log}
d(y)=\displaystyle\frac{d(a)}{a}\Leftrightarrow y=\log|a|+c
\end{equation}
This is the subject of the section 4.2 in \cite{matu-kuhlm:hardy-deriv-EL-series}. There, such solutions $\log$ to (\ref{equ:log}) are called \textbf{pre-logarithms} since they might not be surjective as a real logarithm is. Of course, our study is rooted in the studies of logarithmic and exponential maps in the non-Archimedian context, e.g. \cite{alling:exp_closed_fields, kuhl:ord-exp, ress, vdd:LE-pow-series, vdh:transs_diff_alg}. 

By \cite{alling:exp_closed_fields},  in order to define a pre-logarithm on $k((\Gamma))$, we suppose \emph{from now on} that the coefficient field $k$  carries a logarithm $\log$, e.g. $k=\R$. So in particular $k$ is \emph{ordered}, and therefore so is $k((\Gamma))$ lexicographically. Moreover, the \textbf{logarithm on 1-units} is naturally defined as the following isomorphism of ordered groups:
\begin{equation}\label{eq:log-1-unit}
\begin{array}{lccl}
\log\ :\ &1+k((\Gamma_{>0}))& \rightarrow&k((\Gamma_{>0}))\\
&1+\epsilon&\mapsto&\displaystyle\sum_{n\geq 1}(-1)^{n-1}\displaystyle\frac{\epsilon^n}{n}
\end{array}
\end{equation}

Since any generalized series may be written $a=a_\alpha t^\alpha(1+\epsilon)$, it remains only to define a pre-logarithm on the monic monomials $t^\alpha$. In particular, we are interested in pre-logarithms verifying the \textbf{growth axiom scheme} as a real logarithm does:
\begin{description}
\item[(GA)] $\forall \alpha\in\Gamma_{<0}$, $\ v(\log(t^\alpha))> \alpha$.
\end{description}

In the non differential case, this problem is discussed extensively in \cite{kuhl:ord-exp}. The perspective we adopt in the differential case is to consider pre-logarithms that verify equation (\ref{equ:log}). Moreover, in the context of generalized series fields, we consider pre-logarithms that are \textbf{series morphisms}, i.e. such that:
\begin{description}
\item[(L)] $\forall\alpha=\displaystyle\sum_{\phi\in\Phi}\alpha_\phi\mathds{1}_\phi \in\Gamma,
\ \ \log(t^\alpha)=\displaystyle\sum_{\phi\in\Phi}\alpha_\phi \log(t_\phi)$.
\end{description}
We prove in  \cite[Theorem 4.10]{matu-kuhlm:hardy-deriv-EL-series} that:

\begin{theorem}\label{theo:prelog}
Let $k((\Gamma))$ be endowed with a series derivation of
Hardy type  $d$. Set $\tilde{\theta}=l.u.b.\left\{\theta_{\phi}\ ;\
\phi\in\Phi\right\}$. There exists a unique pre-logarithm  $\log$ on
$k((\Gamma))$ which is a series morphism if and only if the following two conditions hold:\begin{enumerate}
    \item $\tilde{\theta}\notin\displaystyle\bigcup_{\phi\in\Phi}\textrm{Supp}\ \displaystyle\frac{d(t_\phi)}{t_\phi}$;
\item $\forall\phi\in\Phi$, $\forall\tau_{\phi}\in\textrm{Supp}\ \displaystyle\frac{d(t_\phi)}{t_\phi}$,  $v\left(\mathrm{a.i.}(\tau_{\phi})\right)<0$.
\end{enumerate}
 Moreover, this pre-logarithm verifies (GA).
\end{theorem}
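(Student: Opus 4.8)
The plan is to reduce the whole statement to the single question of how to define $\log$ on the monic monomials $t_\phi$. Writing an arbitrary series as $a=a_\alpha t^\alpha(1+\epsilon)$ with $a_\alpha\in k$ and $\epsilon\in k((\Gamma_{>0}))$, the requirement (L) forces $\log(t^\alpha)=\sum_{\phi}\alpha_\phi\log(t_\phi)$, while $\log a_\alpha$ is prescribed by the logarithm on $k$ and $\log(1+\epsilon)$ by formula (\ref{eq:log-1-unit}); and since $\log$ must solve (\ref{equ:log}), evaluating at $a=t_\phi$ (which is positive, being monic) shows that $\log(t_\phi)$ has to be an \emph{integral} of $d(t_\phi)/t_\phi$, i.e. $d(\log t_\phi)=d(t_\phi)/t_\phi$. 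Conversely, if for each $\phi$ one is given an integral $L_\phi$ of $d(t_\phi)/t_\phi$ such that every family $(\alpha_\phi L_\phi)_{\phi\in\mathrm{supp}\ \alpha}$ is summable, then putting $\log(t_\phi):=L_\phi$ and extending through (L), through the coefficient logarithm and through (\ref{eq:log-1-unit}) produces a series morphism; that the resulting map satisfies $d(\log a)=d(a)/a$ is then a direct consequence of the strong linearity of $d$, the strong Leibniz rule, and $d(L_\phi)=d(t_\phi)/t_\phi$. So everything comes down to: when does such a coherent, summable family $(L_\phi)_\phi$ of integrals exist, when is it unique, and what does it do to valuations?

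For the direction $(\Leftarrow)$, assume conditions 1 and 2. Fix $\phi$ and write $d(t_\phi)/t_\phi=\sum_{\mu\in M}c_\mu t^\mu$, $M:=\mathrm{Supp}\ (d(t_\phi)/t_\phi)$. By condition 1, $\tilde\theta\notin M$, so $M$ splits into $M_{>}:=\{\mu\in M:\mu>\tilde\theta\}$ and $M_{<}:=\{\mu\in M:\mu<\tilde\theta\}$. The part of the series supported on $M_{>}$ has valuation $>\tilde\theta$, hence admits an integral by Corollary \ref{coro:integ}; the part supported on $M_{<}$ is integrated by the iterated monomial asymptotic integration procedure furnished by Theorem \ref{theo:integ-asymp} together with the explicit monomial asymptotic integral formula, condition 1 being precisely what guarantees that the monomial asymptotic integration never encounters the exceptional value $\tilde\theta$ and hence never stalls; by spherical completeness of $k((\Gamma))$ the process converges (in the sense of \cite[Theorem 55]{kuhl:Hensel-lemma}) to an integral, and adding the two pieces yields an integral $L_\phi$ of $d(t_\phi)/t_\phi$. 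Condition 2 forces every monomial occurring along the way, hence $L_\phi$ itself, to be supported in $\Gamma_{<0}$. A Ramsey-type well-ordering argument in the spirit of the proof of Theorem \ref{theo:series-deriv} then shows that the families $(\alpha_\phi L_\phi)_{\phi\in\mathrm{supp}\ \alpha}$ are summable for every $\alpha\in\Gamma$, so by the first paragraph we obtain a series morphism pre-logarithm. Uniqueness holds because $\log(t_\phi)$ must be an integral of $d(t_\phi)/t_\phi$ and the series morphism requirement kills any term of non-negative valuation in it — a non-zero constant term, for instance, would make the right-hand side of (L) non-summable whenever $\alpha$ has infinite support — so $\mathrm{Supp}\ \log(t_\phi)\subset\Gamma_{<0}$, which pins down $\log(t_\phi)$ among the integrals of $d(t_\phi)/t_\phi$ (two of these differ only by a constant in $\ker d=k$). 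Finally, $\mathrm{Supp}\ L_\phi\subset\Gamma_{<0}$ combined with the right-shift property $v_\Gamma(\theta_{\phi_1}-\theta_{\phi_2})>\phi_1$ from condition (H3) of Theorem \ref{theo:hardy-deriv}, read through the shape of the monomial asymptotic integrals, gives the growth axiom (GA).

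For the direction $(\Rightarrow)$, suppose a series morphism pre-logarithm exists. For each $\phi$, $\log(t_\phi)$ is an integral of $d(t_\phi)/t_\phi$, which therefore admits an asymptotic integral; by Theorem \ref{theo:integ-asymp} this already gives $\theta_\phi\neq\tilde\theta$. To obtain the full condition 1, suppose $\tilde\theta\in M$ for some $\phi$. A monomial of valuation $\tilde\theta$ has no asymptotic integral (Theorem \ref{theo:integ-asymp}), and one deduces that the truncation of $d(t_\phi)/t_\phi$ to the valuations $\le\tilde\theta$ — and hence $d(t_\phi)/t_\phi$ itself — cannot possess an integral in $k((\Gamma))$, contradicting the existence of $\log(t_\phi)$; thus condition 1 holds. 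Condition 2 follows by the same summability consideration as in the previous paragraph: the series morphism property forces $\mathrm{Supp}\ \log(t_\phi)\subset\Gamma_{<0}$, and reading off the leading monomials of $\log(t_\phi)$ via the monomial asymptotic integral formula yields $v(\mathrm{a.i.}(\tau_\phi))<0$ for every $\tau_\phi\in M$.

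The main obstacle is the combinatorial heart hidden in the two steps I phrased as ``the procedure converges to an integral'' and ``the families are summable'': one must show that the iterated integration of $d(t_\phi)/t_\phi$ actually terminates on a genuine series (with well-ordered support) lying inside $\Gamma_{<0}$, and that reassembling the $L_\phi$ through (L) always produces summable families. This is exactly the kind of Ramsey / well-partial-order argument used to prove Theorem \ref{theo:series-deriv}, and conditions 1 and 2 are calibrated precisely so that it goes through.
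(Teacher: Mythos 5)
Your overall architecture does match the route indicated by the paper for \cite[Theorem 4.10]{matu-kuhlm:hardy-deriv-EL-series} (the survey itself only cites that reference and points to spherical completeness and the reduction to the $\frac{d(t_\phi)}{t_\phi}$'s): everything hinges on producing, for each $\phi$, an integral $L_\phi$ of $\frac{d(t_\phi)}{t_\phi}$ lying in $k((\Gamma_{<0}))$, with condition 1 governing integrability and condition 2 governing infinite largeness. But the two steps you yourself defer to an unspecified ``Ramsey-type argument'' are the actual content of the theorem, and as written they are gaps, not technicalities. First, condition 2 only constrains $\mathrm{a.i.}(\tau)$ for $\tau\in\mathrm{Supp}\ \frac{d(t_\phi)}{t_\phi}$, whereas the exponents produced by the iterated corrections in the asymptotic-integration process lie outside that set; your assertion that ``condition 2 forces every monomial occurring along the way to be supported in $\Gamma_{<0}$'' needs a real lemma, e.g. that under (H3) one has $v\left(\mathrm{a.i.}(\tau)\right)<0$ if and only if $\tau<\tilde\theta$ (which rests on showing $v_\Gamma(\tilde\theta-\theta_\psi)>\psi$, a small convex-subgroup/l.u.b. argument), followed by a truncation argument showing that an integral with no constant term of a series supported below $\tilde\theta$ cannot have exponents $\geq 0$. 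Second, the summability of the families $(\alpha_\phi L_\phi)_{\phi\in\mathrm{supp}\ \alpha}$ — without which (L) does not even define a map — is nowhere established: (C1)--(C2) are hypotheses on the supports of the $\frac{d(t_\phi)}{t_\phi}$, not of their integrals, so transferring them through the asymptotic-integration map is a genuine piece of work, not something ``calibrated'' automatically by conditions 1 and 2.

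There is also a flaw in how you get uniqueness and the direction $(\Rightarrow)$ of condition 2. Deriving $\mathrm{Supp}\ \log(t_\phi)\subset\Gamma_{<0}$ from summability of (L) does not work: adding nonzero constants to finitely many $\log(t_\phi)$ — or to all of them when every element of $\Gamma$ has finite support, e.g. when $\Gamma$ is a Hahn sum — leaves all families summable and still solves $d(y)=\frac{d(a)}{a}$, and summability likewise does not exclude positive exponents in $\mathrm{Supp}\ \log(t_\phi)$. What pins the constants down is the definition of pre-logarithm used here (cf. Section \ref{sect:EL-series}: by definition $\log$ restricts to an embedding $t^\Gamma\hookrightarrow k((\Gamma_{<0}))$), not a summability obstruction. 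The same issue infects your converse direction: ``reading off the leading monomials'' only yields $v\left(\mathrm{a.i.}(\theta_\phi)\right)<0$; to obtain $v\left(\mathrm{a.i.}(\tau_\phi)\right)<0$ for \emph{every} element of the support you again need the equivalence with $\tau_\phi<\tilde\theta$ together with an argument that a purely infinite integral forces the entire support of $\frac{d(t_\phi)}{t_\phi}$ to lie below $\tilde\theta$. So the skeleton is right, but the parts you leave out are precisely where the proof lives.
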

In \cite[Corollary 4.13]{matu-kuhlm:hardy-deriv-EL-series}, we obtain explicit formulas for this pre-logarithm.

Generally speaking, the principle is the same as before: to get the good property on $k((\Gamma))$ - here that any $\displaystyle\frac{d(a)}{a}$ has an integral - it suffices to have it for the $\displaystyle\frac{d(t_\phi)}{t_\phi}$'s. Once again, the key property that we use in the proof is the spherical completeness of $k((\Gamma))$, more precisely the existence of a \emph{fixed point principle} as in \cite{kuhl:Hensel-lemma, p-crampe-rib_FP-gen-series}. 

 As a conclusion to this section, note that one can derive from the preceding results methods of construction of pre-logarithms and classes of examples in the same spirit as (\ref{ex:generaux}). Conversely, one may also try to deduce from a given pre-logarithmic structure a corresponding series derivation of Hardy type: see \cite[Section 5]{matu-kuhlm:hardy-deriv-EL-series}
\begin{example}\label{ex:pre-log}
Suppose that $\Phi$ carries a right-shift  $\sigma:\Phi\rightarrow\Phi$, then the map defined by:
$$\log_{\sigma}\left(t^{\sum_{\phi \in \Phi} \gamma_\phi\mathds{1}_{\phi}}\right)= \log_{\sigma}\left(\prod_{\phi \in \Phi} t_\phi^{\gamma_\phi}\right) = \sum_{\phi \in \Phi} \gamma_\phi t_{\sigma(\phi)}^{-1}$$
induces a pre-logarithm on $k((\Gamma))$. Moreover if $\sigma$ is onto, denote  the \textbf{convex orbit} of any $\phi$ by $\mathcal{C}_\phi=\{\psi\in\Phi\ |\ \exists k\in\mathbb{N},\ \sigma^k(\phi)\preccurlyeq\psi\preccurlyeq \sigma^{-k}(\phi)\}$.
Then the pre-logarithm $\log_\sigma$ corresponds to a series derivation of Hardy type such that:
\begin{description}
\item \emph{For any $\phi_1<\phi_2\in\Phi$, the principal part up to $\mathcal{C}_{\phi_1}$ of  $\theta_{\phi_1}- \theta_{\phi_2}$ is equal to the one of $\displaystyle\sum_{j=1}^\infty \mathds{1}_{\sigma^j(\phi_2)}-\mathds{1}_{\sigma^j(\phi_1)}$,
with in particular $\theta_{\sigma^{k}(\phi)}= \theta_{\phi}-\displaystyle\sum_{j=1}^k\mathds{1}_{\sigma^j(\phi)}$  for any $k\in\mathbb{N}$.}
\end{description}
\end{example}

\section{Exponential-logarithmic series fields}\label{sect:EL-series}

The last but not least differential equation we are interested in is the one for the \emph{exponential}: 
\begin{equation}\label{equ:exp}
 d(y)=d(a)y\Leftrightarrow y=c\exp(a)
\end{equation}
According to \cite{kuhl:exp-pow-series}, there is no hope for defining an exponential and logarithmic structure on a generalized series field. Nevertheless there are several similar ways to obtain non Archimedian exponential-logarithmic fields seen as \emph{subfields} of generalized series fields: see below and \cite{vdd:LE-pow-series, vdh:transs_diff_alg, kuhl:ord-exp}.  See also \cite{kuhm-tressl:EL-LE} for a comparison between LE-series and EL-series. The original idea is due to \cite{dahn:limit-exp-terms,dahn-gorhing} and may be seen as an abstraction of Hardy's construction of log-exp functions \cite{hardy}. In each case, it consists in starting with some initial field of generalized series, and then taking the closure under towering logarithmic and/or exponential extensions. As a main difference, for LE-series and grid-based transseries only grid-based supports are allowed for the series, whereas for EL-series and well-ordered transseries the supports are well-ordered. For instance, the grid-based transseries field is obtained by starting with a field of grid-based series with $\Phi=\N$ corresponding to the various iterates of $\log(x)$, and by applying the exponential closure process described, allowing only grid-based supports for the series \cite[Section 4.3]{vdh:transs_diff_alg}. 

The fields of LE-series and of grid-based transseries are naturally equipped with a series derivation of Hardy type  \cite[Section 5.1]{vdh:transs_diff_alg}. It is also possible to equip certain fields of well-ordered transseries with such a derivation \cite{schm01}. Our aim here is to show that it is also possible to do that for EL-series fields. \\ 

Here we get started directly with a generalized series field $k((\Gamma))$ endowed with a pre-logarithm strong morphism $\log$ and a Hardy type series derivation $d$ as in Section \ref{sect:prelog}. The construction we present here is the construction of the exponential-logarithmic closure of $k((\Gamma))$ as in \cite{kuhl:ord-exp}. It consists in iterating the following exponential extension procedure.

By definition, the pre-logarithm defines an embedding of ordered groups $\log:t^\Gamma\hookrightarrow k((\Gamma_{<0}))$. Keep in mind that this map \emph{cannot} be onto \cite{kuhl:exp-pow-series}. In other words, the exponential of the elements of $k((\Gamma_{<0}))\setminus\log(\Gamma)$ are missing in the left-hand side.  Set the following multiplicative copy of the ordered additive group $\Gamma^\sharp:= k((\Gamma_{<0}))$:
\begin{center}
$t^{\Gamma^\sharp}:=\{t^a\ |\ a\in k((\Gamma_{<0})),\ \mathrm{with}\ t^{\log(t^\alpha))}:=t^\alpha,\ \alpha\in\Gamma\}$
\end{center} 
By construction $t^\Gamma\subset t^{\Gamma^\sharp}$, so the generalized series field $k((\Gamma^\sharp))$ extends $k((\Gamma))$. Define also a pre-logarithm  by:
\begin{center}
$\begin{array}{llcl}
\log^\sharp:&t^{\Gamma^\sharp}&\hookrightarrow&k((\Gamma^\sharp_{<0}))\\
&t^a&\mapsto &a 
\end{array}$
\end{center}
$\log^\sharp$ extends $\log$, and $k((\Gamma_{<0}))$ embeds in $k(( \Gamma^\sharp_{<0}))$. Call $\left(k((\Gamma^\sharp)),\log^\sharp\right)$ the \textbf{exponential extension} of $\left(k((\Gamma)),\log\right)$.

By iterating this exponential extension procedure, we can define the $n$'th exponential extension $\left(k((\Gamma^{\sharp n})),\log^{\sharp n}\right)$, obtaining thus an inductive system of pre-logarithmic fields. The inductive limit is defined as the \textbf{exponential-logarithmic series (EL-series) field} (induced by $\left(k((\Gamma)),\log\right)$ in the sense of \cite{kuhl:ord-exp}, say $\left(k((\Gamma))^{\mathrm{EL}},\log^{\mathrm{EL}}\right)$.

\begin{theorem}[Kuhlmann]
For any generalized series field $k((\Gamma))$ endowed with a pre-logarithm $\log$, the couple $\left(k((\Gamma))^{\mathrm{EL}},\log^{\mathrm{EL}}\right)$ is such that:
\begin{center}
$\log^{\mathrm{EL}}:\left(k((\Gamma))^{\mathrm{EL}}_{>0},.,\leq\right)\rightarrow \left(k((\Gamma))^{\mathrm{EL}},+,\leq\right)$
\end{center}
is an isomorphism of ordered groups, verifying the growth axiom scheme (GA). Therefore it admits a well-defined exponential map $\exp^{\mathrm{EL}}$ as inverse map, and  $k((\Gamma))^{\mathrm{EL}}$ is a non-Archimedian exponential-logarithmic field.
\end{theorem}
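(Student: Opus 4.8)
The plan is to prove, by induction on $n$, that each $\log^{\sharp n}$ is an injective, order-preserving group homomorphism $\big(k((\Gamma^{\sharp n}))_{>0},\cdot,\leq\big)\to\big(k((\Gamma^{\sharp n})),+,\leq\big)$ verifying the growth axiom (GA), and then to read off the statement by passing to the direct limit. The case $n=0$ is exactly Theorem~\ref{theo:prelog}: the pre-logarithm on monic monomials, the logarithm on $1$-units~(\ref{eq:log-1-unit}), and the logarithm of $k$ combine into a pre-logarithm on $k((\Gamma))_{>0}$ with these properties, and it verifies (GA). For the inductive step, $\log^\sharp$ (which sends $t^a\mapsto a$, hence restricts to $\log$ on $t^\Gamma$ since $t^{\log(t^\alpha)}:=t^\alpha$) is well defined precisely because $\log$ is injective on $t^\Gamma$; assembling it with the $1$-unit logarithm and the logarithm of $k$ inside the larger field gives a pre-logarithm at level $n+1$ extending the one at level $n$, so the $\log^{\sharp n}$ form a directed system and $\log^{\mathrm{EL}}$ is a well-defined injective, order-preserving group homomorphism.

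Two points of the inductive step need a short argument. For order-preservation on all positive elements, not merely on monomials: since $a<b$ (with $a,b>0$) is equivalent to $\log(b/a)>0$, it suffices to show $\log(c)>0$ for every $c>1$; writing $c=c_\gamma t^\gamma(1+\eta)$ with $\gamma=v(c)\leq 0$, the leading term of $\log(c)$ comes from $\log(t^\gamma)$ when $\gamma<0$ (positive, of strictly negative value, since $\log(t^\gamma)$ is purely infinite), from $\log(c_\gamma)$ when $\gamma=0$ and $c_\gamma>1$, and from $\log(1+\eta)$ when $c=1+\eta$, and is positive in every case. For the propagation of (GA): a new monic monomial $t^b$ has $b\in\Gamma^{\sharp(n+1)}_{<0}$, $\log^{\sharp(n+1)}(t^b)=b$, and the value in $\Gamma^{\sharp(n+1)}$ of the series $b$ is the image under the tower embedding of $v_{\Gamma^{\sharp n}}(b)<0$, hence is $\geq 0$, hence $>b$, which is (GA).

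The heart of the argument is surjectivity of $\log^{\mathrm{EL}}$. Given $b\in k((\Gamma))^{\mathrm{EL}}$, fix $n$ with $b\in k((\Gamma^{\sharp n}))$ and split $b=b_{<0}+b_0+b_{>0}$ into its purely infinite part (support in $\Gamma^{\sharp n}_{<0}$), its constant term $b_0\in k$, and its infinitesimal part. By construction $b_{<0}$ is an element of the value group $\Gamma^{\sharp(n+1)}=k((\Gamma^{\sharp n}_{<0}))$, so $t^{b_{<0}}\in k((\Gamma^{\sharp(n+1)}))$ and $\log^{\sharp(n+1)}(t^{b_{<0}})=b_{<0}$; since $k$ carries a logarithm, $b_0=\log(c_0)$ for some $c_0\in k_{>0}$; and since~(\ref{eq:log-1-unit}) is onto $k((\Gamma^{\sharp n}_{>0}))$, $b_{>0}=\log(1+\epsilon)$ for some infinitesimal $\epsilon$. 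Hence $b=\log^{\sharp(n+1)}\!\big(t^{b_{<0}}\,c_0\,(1+\epsilon)\big)$ with the argument positive, so $b$ is in the image; letting $n$ and $b$ vary gives surjectivity.

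Collecting these, $\log^{\mathrm{EL}}$ is a bijective, order-preserving group homomorphism from $\big(k((\Gamma))^{\mathrm{EL}}_{>0},\cdot,\leq\big)$ to $\big(k((\Gamma))^{\mathrm{EL}},+,\leq\big)$ verifying (GA), hence an isomorphism of ordered groups (reverse monotonicity follows from injectivity plus the forward implication). Its inverse $\exp^{\mathrm{EL}}$ is therefore a well-defined exponential on the ordered field $k((\Gamma))^{\mathrm{EL}}$ — a field, being a direct limit of generalized series fields — which is non-Archimedean since $\Gamma$ is nontrivial, so $t^\gamma$ with $\gamma<0$ is infinitely large. I expect the main obstacle to be the bookkeeping in the surjectivity step — ensuring the purely infinite part $b_{<0}$ genuinely re-enters as an exponent one level higher in the tower and that the maps $\log^{\sharp n}$ are mutually compatible so that the direct limit is legitimate — together with the small valuation computation carrying (GA) through each extension.
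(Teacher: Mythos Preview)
The paper does not give its own proof of this theorem: it is stated as a result of Kuhlmann, with the reference [Kuh00], and no argument is supplied in the text. So there is nothing in the paper to compare your proposal against directly.

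That said, your outline is essentially the standard argument from Kuhlmann's monograph: induction along the tower of exponential extensions to propagate injectivity, order-preservation and (GA), together with the three-part decomposition $b=b_{<0}+b_0+b_{>0}$ for surjectivity (purely infinite part becomes an exponent one level up, constant part uses the logarithm on $k$, infinitesimal part uses the $1$-unit logarithm~(\ref{eq:log-1-unit})). Two small corrections are in order. First, your base case is not ``exactly Theorem~\ref{theo:prelog}'': that theorem \emph{constructs} a pre-logarithm from a Hardy type series derivation under certain hypotheses, whereas here the pre-logarithm is simply \emph{given} as part of the data --- the base case is the hypothesis itself, together with the definition of a pre-logarithm (embedding of ordered groups, values in $k((\Gamma_{<0}))$, (GA)). Second, your (GA) step is slightly garbled: the image in $\Gamma^{\sharp(n+1)}$ of the leading exponent $\alpha_0=v_{\Gamma^{\sharp n}}(b)$ is $\log^{\sharp n}(t^{\alpha_0})$, and the inequality $v(b)>b$ one actually needs follows from (GA) at level $n$ (which forces $v_{\Gamma^{\sharp n}}\big(\log^{\sharp n}(t^{\alpha_0})\big)>\alpha_0$, so that the leading term of $v(b)-b$ is $-b_{\alpha_0}t^{\alpha_0}>0$), rather than from a bare ``$\geq 0$'' claim. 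With these adjustments the sketch is sound.
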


Note that $k((\Gamma))^{\mathrm{EL}}$ is a strict subfield of the generalized series field $k((\Gamma^{\mathrm{EL}}))$ where $\Gamma^{\mathrm{EL}}:=\displaystyle\bigcup_{n\in\N} \Gamma^{\sharp n}$. For example, the series $t_\phi^{-1}+t^{-t_\phi}+t^{-t^{t_\phi}}+\cdots$ belongs to $k((\Gamma^{\mathrm{EL}}))\setminus  k((\Gamma))^{\mathrm{EL}}$. Nevertheless, $\Gamma^{\mathrm{EL}}$ is the value group of $k((\Gamma))^{\mathrm{EL}}$ and $k((\Gamma^{\mathrm{EL}}))$, and $k$ is their residue field, so that $k((\Gamma))^{\mathrm{EL}}\subset k((\Gamma^{\mathrm{EL}}))$ is an immediate extension. \\

Considering $k((\Gamma))$ endowed with a Hardy type series derivation $d$, we are interested in extending $d$ to $k((\Gamma))^{\mathrm{EL}}$. In \cite[Theorem 6.2]{matu-kuhlm:hardy-deriv-EL-series}, we showed that this is always feasible:

\begin{theorem}\label{theo:derivEL}
The series derivation of Hardy type $d$ on
$k((\Gamma))$ extends to a series derivation of Hardy type on $k((\Gamma))^{\mathrm{EL}}$, and this extension is uniquely determined.
\end{theorem}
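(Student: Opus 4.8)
The plan is to proceed by induction on the exponential extension level, reducing the statement to a single step: given that $d$ is already a Hardy type series derivation on $k((\Gamma))$ (equivalently, on $k((\Gamma^{\sharp n}))$ at stage $n$), one shows that $d$ extends uniquely to a Hardy type series derivation on the exponential extension $k((\Gamma^\sharp))$. Taking the inductive limit then yields a derivation on $k((\Gamma))^{\mathrm{EL}} = \varinjlim k((\Gamma^{\sharp n}))$, and the Hardy type axioms (HD1)--(HD3), being checkable on the defining data $\phi\mapsto\theta_\phi$ via Theorem \ref{theo:hardy-deriv}, pass to the limit. So the core is the single exponential step.

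For the single step, the new monomials $t^a$ with $a\in k((\Gamma_{<0}))\setminus\log(t^\Gamma)$ are forced to have logarithmic derivative $d(t^a)/t^a = d(a)$, since we want $\log^\sharp(t^a)=a$ and compatibility with equation (\ref{equ:log}): if $d$ is to commute with the pre-logarithm in the sense that $d(\log^\sharp(m))=d(m)/m$ for monomials $m$, then necessarily $d(t^a)/t^a=d(a)$, where $d(a)$ is already defined since $a\in k((\Gamma_{<0}))\subset k((\Gamma))$. This \emph{forces} the definition and gives uniqueness immediately. One must then check that this prescription $\phi^\sharp \mapsto d(t_{\phi^\sharp})/t_{\phi^\sharp}$ on the enlarged spine $\Phi^\sharp$ (the Archimedean classes of $\Gamma^\sharp = k((\Gamma_{<0}))$, ordered by the valuation $v$ of $k((\Gamma))$) satisfies the criterion (C1)--(C2) of Theorem \ref{theo:series-deriv}, so that it indeed extends to a genuine series derivation on $k((\Gamma^\sharp))$ via strong Leibniz rule and strong linearity; and then that the resulting $\theta$-data satisfies (H3) of Theorem \ref{theo:hardy-deriv}, so the extension is again of Hardy type. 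Here one uses crucially that on $k((\Gamma))$ the original data already satisfies (H3), i.e. the maps $\phi\mapsto\theta_\phi$, $\theta_{\phi_1}\mapsto\theta_{\phi_2}$ and $\phi_1\mapsto v_\Gamma(\theta_{\phi_1}-\theta_{\phi_2})$ are right-shifts, together with the growth axiom (GA) for the pre-logarithm (Theorem \ref{theo:prelog}) and the description of $v(\mathrm{a.i.})$ from the asymptotic integration results of Section \ref{sect:prelog}.

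The identification of the $\theta$-data at the new level is the heart of the computation: for a new monomial $t^a$ with $v(t^a)=a$ (viewed in $\Gamma^\sharp$), one has $\theta_{[a]} = v(d(t^a)/t^a) = v(d(a))$, and one must locate $v(d(a))$ relative to $a$ and relative to the old $\theta_\phi$'s. Since $a\in k((\Gamma_{<0}))$, writing $a = a_\beta t^\beta+\cdots$ with $\beta = v(a) \in \Gamma_{<0}$, strong linearity and the strong Leibniz rule give $d(a) = a_\beta t^\beta(\beta_{0}\, d(t_{\psi})/t_{\psi}+\cdots)+\cdots$ where $\psi = v_\Gamma(\beta)$, so $v(d(a)) = \beta + \theta_\psi$, and the monotonicity and right-shift properties in (H3) for the old data, plus (GA), yield the required ordering and right-shift properties for the extended data. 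One also checks (HD1) survives: $\ker d$ stays equal to $k$ because no new element of the new valuation ring outside $\mathfrak{m}$ other than constants is killed, which follows from (H3) at the new level exactly as in the proof of Theorem \ref{theo:hardy-deriv}.

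\textbf{Main obstacle.} The delicate point is verifying the criterion (C1)--(C2) and the condition (H3) for the \emph{enlarged} spine $\Phi^\sharp$, whose elements are Archimedean classes of series in $k((\Gamma_{<0}))$ rather than elements of a nicely structured group; sequences of such classes can interleave the old classes and the genuinely new ones in complicated ways, and one must show the relevant sequences of supports cannot be infinitely descending (C1), resp. eventually satisfy the right-shift inequality (C2). The Ramsey-type argument underlying Theorem \ref{theo:series-deriv} should reduce this to a finite-combinatorial core, but the bookkeeping — tracking how $v(d(a)) = v(a) + \theta_{v_\Gamma(v(a))}$ behaves as $v(a)$ ranges over an infinite monotone sequence in $k((\Gamma_{<0}))$, and why (GA) forbids the pathological configurations — is where the real work lies. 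I expect this is handled by a careful induction showing that the whole package (series derivation + (H3) + (GA)-compatible pre-logarithm) is preserved at each exponential step, so that the inductive limit inherits all of it.
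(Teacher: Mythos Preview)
The paper is a survey and does not supply its own proof of this theorem; it simply cites \cite[Theorem 6.2]{matu-kuhlm:hardy-deriv-EL-series}. Your plan coincides with the natural argument and with what that reference carries out: at each exponential step the compatibility $d(\log^\sharp(t^a))=d(t^a)/t^a$ forces $d(t^a)/t^a=d(a)$, whence uniqueness; one then checks that the induced map on the new spine $\Phi^\sharp\cong\Gamma_{<0}$ satisfies (C1)--(C2) of Theorem~\ref{theo:series-deriv} and (H3) of Theorem~\ref{theo:hardy-deriv}, and passes to the inductive limit. Your key formula $\theta^\sharp_{[a]}=v(d(a))=v(a)+\theta_{v_\Gamma(v(a))}$ (a direct consequence of (\ref{equ:deriv-log})) is exactly what drives the verification of (H3) at the new level from (H3) at the old level together with (GA).

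One small sharpening worth making explicit: to apply Theorem~\ref{theo:series-deriv} you must define $d$ on the \emph{fundamental} monomials $t_{\phi^\sharp}$, not on all $t^a$. Concretely, $\Phi^\sharp\cong\Gamma_{<0}$ with natural representatives $\mathds{1}^\sharp_\alpha:=t^\alpha\in\Gamma^\sharp$, so $t^\sharp_\alpha:=t^{t^\alpha}$ and the forced prescription reads $d(t^\sharp_\alpha)/t^\sharp_\alpha=d(t^\alpha)$; the strong Leibniz rule then \emph{recovers} $d(t^a)/t^a=d(a)$ for arbitrary $a\in\Gamma^\sharp$ by strong linearity of the old $d$. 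With this bookkeeping in place, the supports entering (C1)--(C2) all lie in the old $\Gamma\hookrightarrow\Gamma^\sharp$, and the Ramsey-type obstacle you flag reduces to the already-known (H3) on $\Phi$ plus the order-preserving nature of $\alpha\mapsto\alpha+\theta_{v_\Gamma(\alpha)}$, which is straightforward.
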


Of course it is understood that the derivations and exponential and logarithmic maps we consider verify the corresponding differential equations (\ref{equ:log}) and (\ref{equ:exp}): such $k((\Gamma))^{\mathrm{EL}}$ is a \emph{non-Archimedian differential exponential-logarithmic field}.

Moreover $k((\Gamma))^{\mathrm{EL}}$ inherits the properties of $k((\Gamma))$ concerning asymptotic integration and integration. Denote as before $\tilde{\theta}=l.u.b.\left\{\theta_{\phi}\ ;\
\phi\in\Phi\right\}$ where $\Phi$ is the rank of the \emph{initial} valued group $\Gamma$. By \cite[Theorem 7.1 and Corollary 7.2]{matu-kuhlm:hardy-deriv-EL-series}, we have that:
\begin{theorem}\label{theo:integ-EL}
 A series $a\in k((\Gamma))^{\rm{ EL }}$ admits an asymptotic integral if and only if $a\nasymp \tilde{\theta}$.

The EL-series field $k((\Gamma))^{\rm{ EL }}$ is closed under integration if and only if $\tilde{\theta}\notin \Gamma^{\rm{ EL}}$.
\end{theorem}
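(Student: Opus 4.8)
The plan is to reduce the statement about the EL-series field $k((\Gamma))^{\mathrm{EL}}$ to the corresponding statement about the initial series field $k((\Gamma))$, which we already control: by Theorem \ref{theo:integ-asymp} (Rosenlicht) applied in $k((\Gamma))$ and the reduction in Section \ref{sect:prelog}, the set of values $v(d(b)/b)$ ranges exactly over $\{\theta_\phi\ ;\ \phi\in\Phi\}$, with least upper bound $\tilde\theta$. The key observation is that the exponential extension procedure does \emph{not} introduce any new logarithmic-derivative values below $\tilde\theta$: the monomials adjoined in $k((\Gamma^{\sharp}))$ are of the form $t^a$ with $a\in k((\Gamma_{<0}))$, so by the strong Leibniz rule $d(t^a)/t^a=d(a)$, and since $v(a)<0$ one has, by (HD2)/l'Hospital's rule and the growth properties of $d$, that $v(d(a))$ is strictly larger than the value of any logarithmic derivative already present — more precisely $v(d(t^a)/t^a)>\tilde\theta$. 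Thus in $k((\Gamma))^{\mathrm{EL}}$ the l.u.b. of the logarithmic-derivative values is \emph{still} $\tilde\theta$, even though $\Gamma^{\mathrm{EL}}$ is much bigger than $\Gamma$.

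First I would make this observation precise by induction on the exponential-extension level $n$: I would show that for every $b\in k((\Gamma^{\sharp n}))\setminus\{0\}$ with $v(b)\neq 0$, one has $v(d(b)/b)\leq\tilde\theta$, with the value $\tilde\theta$ itself not attained (or attained only in the boundary sense governed by whether $\tilde\theta\in\Gamma^{\mathrm{EL}}$). The base case $n=0$ is exactly the content of Section \ref{sect:hardy-deriv} together with condition (H3) of Theorem \ref{theo:hardy-deriv}. For the inductive step, write a general nonzero element of $k((\Gamma^{\sharp(n+1)}))$ as $c\,t^{\beta}(1+\epsilon)$ with $\beta\in\Gamma^{\sharp(n+1)}$, reduce via the logarithmic derivative of a $1$-unit (which has value $>0$, hence $\geq\tilde\theta$ is false — it is large) to the case of a monomial $t^{\beta}$, and then use that $\beta$ is an integer combination of old monomials and newly adjoined ones $t^{a}$ ($a\in k((\Gamma^{\sharp n}_{<0}))$). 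The strong Leibniz rule gives $d(t^{\beta})/t^{\beta}$ as a finite sum of terms $d(t_\psi)/t_\psi$ and $d(t^{a})/t^{a}=d(a)$; the value of such a sum is the minimum of the values of the summands, each of which is $\leq\tilde\theta$ by the inductive hypothesis applied at level $n$ (for the $d(a)$ term one invokes (HD2) in $k((\Gamma^{\sharp n}))$, whose l.u.b. is $\tilde\theta$ by induction, noting $v(a)<0\neq 0$).

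Once the l.u.b. computation is in place, the first assertion follows immediately from Rosenlicht's criterion (Theorem \ref{theo:integ-asymp}), now applied \emph{inside} $k((\Gamma))^{\mathrm{EL}}$, which is itself a valued differential field with a Hardy type derivation by Theorem \ref{theo:derivEL}: a nonzero $a$ admits an asymptotic integral if and only if $v(a)\neq\tilde\theta$, i.e. $a\nasymp\tilde\theta$ (the condition $v(a)=0$ being absorbed since then trivially $a$ has an asymptotic integral or $v(a)\neq\tilde\theta$ automatically as $\tilde\theta\neq 0$). For the second assertion, I would invoke the theorem of \cite[Theorem 55]{kuhl:Hensel-lemma}, exactly as in Corollary \ref{coro:integ}: $k((\Gamma))^{\mathrm{EL}}$ is spherically complete (being a generalized-series-type field, in fact an immediate subextension of $k((\Gamma^{\mathrm{EL}}))$ — here one must check it really is spherically complete, which holds because it is maximally valued with value group $\Gamma^{\mathrm{EL}}$ and residue field $k$), so asymptotic integration implies integration. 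Hence every $a$ with $v(a)>\tilde\theta$ has an integral, and $k((\Gamma))^{\mathrm{EL}}$ is closed under integration iff every nonzero $a$ admits an asymptotic integral iff the l.u.b. $\tilde\theta$ is never attained as a value, i.e. $\tilde\theta\notin\Gamma^{\mathrm{EL}}$.

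The main obstacle I anticipate is the inductive step controlling $v(d(t^{a})/t^{a})=v(d(a))$ for the newly created monomials: one must be sure that passing to the exponential extension genuinely cannot produce a logarithmic derivative with value $\geq\tilde\theta$ in the wrong direction — that is, the delicate point is that $d(a)$ for $a\in k((\Gamma^{\sharp n}_{<0}))$ has value which is a logarithmic derivative value \emph{already accounted for} at level $n$, so its supremum does not exceed $\tilde\theta$; this is where the precise interaction between (HD2), the strong Leibniz rule, and the fact that $a$ itself is (a sum of) monomials whose logarithmic derivatives were bounded by $\tilde\theta$ must be handled carefully, and one should phrase the induction hypothesis strongly enough (covering all of $k((\Gamma^{\sharp n}))$, not just monomials) for the argument to close. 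The spherical completeness of the inductive limit $k((\Gamma))^{\mathrm{EL}}$ also deserves an explicit remark, since it is a union of an increasing chain of spherically complete fields and one wants to confirm the limit remains maximally valued with the stated value group and residue field.
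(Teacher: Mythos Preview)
Your overall strategy is the right one --- show that the least upper bound of the logarithmic-derivative values is unchanged when passing from $k((\Gamma))$ to $k((\Gamma))^{\mathrm{EL}}$, then apply Rosenlicht's criterion --- but there are two genuine problems.

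\textbf{First}, the inequality in your opening paragraph is backwards. For a newly adjoined monomial $t^a$ with $a\in k((\Gamma_{<0}))$, one has $d(t^a)/t^a=d(a)$ and $v(d(a))=v(a)+\theta_\phi$ where $\phi=v_\Gamma(v(a))$; since $v(a)<0$ and $\theta_\phi\leq\tilde\theta$, this gives $v(d(t^a)/t^a)<\tilde\theta$, not $>\tilde\theta$. Had your inequality been correct, the l.u.b.\ would \emph{increase} and the conclusion would fail. Fortunately your later induction hypothesis ($v(d(b)/b)\leq\tilde\theta$) is the right one, and the inductive step goes through for the correct reason: $v(d(a))=v(a)+v(d(a)/a)<0+\tilde\theta=\tilde\theta$, using the hypothesis at level $n$ applied to $a$ itself rather than an appeal to (HD2).

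\textbf{Second}, and more seriously, your argument for the integration statement rests on the claim that $k((\Gamma))^{\mathrm{EL}}$ is spherically complete. It is not: the paper notes explicitly that $k((\Gamma))^{\mathrm{EL}}\subsetneq k((\Gamma^{\mathrm{EL}}))$ is a \emph{proper} immediate extension, so $k((\Gamma))^{\mathrm{EL}}$ is not maximally valued. A union of an increasing chain of spherically complete fields need not be spherically complete, and here it fails. The repair is to avoid working in the limit: any $a\in k((\Gamma))^{\mathrm{EL}}$ already lies in some $k((\Gamma^{\sharp n}))$, which \emph{is} a genuine generalized series field and hence spherically complete. Since you have shown that the l.u.b.\ at level $n$ is still $\tilde\theta$, Corollary~\ref{coro:integ} applies inside $k((\Gamma^{\sharp n}))$ and produces an integral of $a$ there, hence in $k((\Gamma))^{\mathrm{EL}}$. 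The converse direction (if $\tilde\theta\in\Gamma^{\mathrm{EL}}$ then integration fails) follows directly from the first part, since $t^{\tilde\theta}$ then has no asymptotic integral.
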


To conclude this section and the article, recall from our introduction that generalized series fields are universal domains for valued fields. In the characteristic zero case, with the EL-series construction, they provide also universal domains for ordered exponential fields. This is brilliantly illustrated by the construction of \emph{surreal numbers }\cite{conway_numb-games,gonshor_surreal,alling:analysis-surreal}. See also \cite{alling_conway-surreal-numb} for a specific emphasis on the generalized series structure and set theoretic topics. These non Archimedian numbers form a \emph{class}, say $\mathbf{NO}$, which can be at the same time viewed as a generalized series field $\R((\mathbf{NO}))$ via the so-called \textbf{Conway normal form} of the surreal numbers
  and  equipped with an \textbf{exponential} \emph{and} a \textbf{logarithmic} functions $\exp$ and $\log=\exp^{-1}$: see \cite[Chapter 5 and 10]{gonshor_surreal}.  
We describe in a recent work \cite{matu-kuhlm:surreel-transseries-EL} a canonical family, say $\Phi$, of representatives of the exp-log equivalence classes of surreal numbers. Let $\Gamma$ be the Hahn group with copies of $\R$ as ribs and $\Phi$ as spine.
With S. Kuhlmann, we conjecture that:
\begin{description}
\item[Differential EL-series conjecture for surreal numbers.] \emph{The class of surreal numbers $\mathrm{\mathbb{NO}}$ may be described as: $$\mathrm{\mathbb{NO}}=\R((\Gamma))^{\rm{ EL }}$$ 
where the exponential and logarithmic functions are Gonshor's $\exp$ and $\log$.\\
Moreover, $\mathrm{\mathbb{NO}}$ can be endowed with a Hardy type series derivation compatible with such EL structure. Consequently, $\mathbf{NO}$ is a universal domain for exp-log differential fields.}
\end{description}

\renewcommand{\refname}{}    
\vspace*{-36pt}              

\frenchspacing

%
%
%





\end{document}